\newtheorem{theorem}{Theorem}
\newtheorem{lemma}[theorem]{Lemma}
\newtheorem{corollary}[theorem]{Corollary}
\newtheorem{proposition}[theorem]{Proposition}
\theoremstyle{definition}
\newtheorem{definition}[theorem]{Definition}
\newtheorem{remark}[theorem]{Remark}
\definecolor{darkred}{rgb}{0.7,0,0} 
\newcommand{\darkred}{\color{darkred}} 
\newcommand{\defn}[1]{\emph{\darkred #1}} 
\newcommand{\begeq}{\begin{equation}}
\newcommand{\eneq}{\end{equation}}
\begin{document}

\title{Cores with distinct parts and bigraded Fibonacci numbers}

\author{Kirill Paramonov}\email{kbparamonov@ucdavis.edu}

\begin{abstract}
The notion of $(a,b)$-cores is closely related to rational $(a,b)$ Dyck paths due to Anderson's bijection, and thus the number of $(a,a+1)$-cores is given by the Catalan number $C_a$. Recent research shows that $(a,a+1)$ cores with distinct parts are enumerated by another important sequence- Fibonacci numbers $F_a$. In this paper, we consider the abacus description of $(a,b)$-cores to introduce the natural grading and generalize this result to $(a,as+1)$-cores. We also use the bijection with Dyck paths to count the number of $(2k-1,2k+1)$-cores with distinct parts. We give a second grading to Fibonacci numbers, induced by bigraded Catalan sequence $C_{a,b} (q,t)$.
\end{abstract}

\maketitle

\section{Introduction}

For two coprime integers $a$ and $b$, the rational Catalan number $C_{a,b}$ and its bigraded generalization $C_{a,b}(q,t)$ have caught the attention of different researchers due to their connection to algebraic combinatorics and geometry \cite{ALW.16, AHJ.14, GM.16, GMV.16}. Catalan numbers can be analyzed from the perspective of different combinatorial objects: rational $(a,b)$-Dyck paths, simultaneous $(a,b)$-core partitions and abacus diagrams. 

In 2015, Amdeberhan~\cite{Amdeberhan.15} conjectured that the number of $(a,a+1)$-cores with distinct parts is equal to the Fibonacci number $F_{a+1}$, and also conjectured the formulas for the largest size and the average size of such partitions. This conjecture has been proven by H.Xiong~\cite{Xiong.15}:

\begin{theorem} (Xiong,15)
\label{thm.s=1}
For $(a,a+1)$-core partitions with distinct parts, we have
\begin{enumerate}
\item the number of such partitions equals to the Fibonacci number $F_{a+1}$;
\item the largest size of such partition is $\big\lfloor \frac{1}{3} \binom{a+1}{2} \big\rfloor$;
\item there are $\frac{3-(-1)^{a \mod 3}}{2}$ such partitions of maximal size;
\item the total number of these partitions and the average sizes are, respectively, given by
$$\sum_{i+j+k=a+1} F_i F_j F_k \quad \text{and} \quad \sum_{i+j+k=a+1} \frac{F_i F_j F_k}{F_{a+1}}.$$
\end{enumerate}
\end{theorem}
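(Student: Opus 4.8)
The plan is to pass to first-column hook lengths (equivalently, to the abacus), where the simultaneous-core condition becomes almost trivial. If $\lambda=(\lambda_1>\lambda_2>\cdots>\lambda_k>0)$ has distinct parts, its first-column hook lengths $h_i=\lambda_i+(k-i)$ form a set $H=\{h_1>\cdots>h_k\}$ of positive integers with $h_i-h_{i+1}\ge 2$ for every $i$; conversely every finite set of positive integers with no two consecutive elements arises from a unique such $\lambda$ via $\lambda_i=h_i-(k-i)$. Using the standard fact that $\lambda$ is a $t$-core precisely when $\beta-t\in H$ for every $\beta\in H$ with $\beta\ge t$, I would first establish the structural lemma: a partition with distinct parts is an $(a,a+1)$-core if and only if $\max H\le a-1$. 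Indeed, if $h_1\ge a$ then either $h_1=a$, forcing the impossible $0=h_1-a\in H$, or $h_1\ge a+1$, in which case $h_1-a$ and $h_1-(a+1)$ are two consecutive integers both forced into $H$, contradicting the distinctness of the parts; conversely, if $\max H<a$ then both core conditions hold vacuously. This proves (1) immediately: $\lambda\mapsto H$ is a bijection onto the subsets of $\{1,\dots,a-1\}$ containing no two consecutive elements, of which there are $F_{a+1}$.

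For (2) and (3), write $[n]=\{1,\dots,n\}$ and compose the bijection above with the classical map $\{h_1<\cdots<h_k\}\mapsto\{h_j-(j-1):1\le j\le k\}$, which sends the no-two-consecutive $k$-subsets of $[a-1]$ bijectively to the arbitrary $k$-subsets $G$ of $[a-k]$ and satisfies $|\lambda|=\sum_{h\in H}h-\binom{k}{2}=\sum_{g\in G}g$. Thus among partitions with exactly $k$ parts the largest size is realized by the unique $G$ equal to the top $k$ elements of $[a-k]$, with value $f(k):=\tfrac12 k(2a-3k+1)$, so $\max_\lambda|\lambda|=\max_{k\ge0}f(k)$ and the number of maximizers equals the number of integers $k$ attaining this maximum. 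Since $f$ is a downward parabola with vertex $k^\ast=(2a+1)/6$, a case analysis on $a\bmod 3$ locates the optimal $k$ near $a/3$, produces $\max_\lambda|\lambda|=\lfloor\tfrac13\binom{a+1}{2}\rfloor$ once one checks that the relevant products are even, and shows that a genuine two-way tie among optimal $k$ occurs exactly when $a\equiv 1\pmod 3$, i.e. that there are $\tfrac{3-(-1)^{a\bmod 3}}{2}$ partitions of maximal size.

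For (4), summing $|\lambda|=\sum_{g\in G}g$ over all admissible $\lambda$ (equivalently, over all $k$ and all $k$-subsets $G\subseteq[a-k]$), and using that each element of $[a-k]$ lies in $\binom{a-k-1}{k-1}$ such subsets while $\sum_{g=1}^{a-k}g=\binom{a-k+1}{2}$, yields
\[
\sum_{\lambda}|\lambda|=\sum_{k\ge 1}\binom{a-k+1}{2}\binom{a-k-1}{k-1}.
\]
It then remains to identify the right-hand side with $\sum_{i+j+k=a+1}F_iF_jF_k$, which I would do with ordinary generating functions in $x$ (indexed by $a$): the substitution $m=a-k$ collapses the left-hand sum to $\sum_{m\ge1}\binom{m+1}{2}x^{m+1}(1+x)^{m-1}=\frac{x^2}{(1-x-x^2)^3}$, which is exactly $x^{-1}\bigl(\tfrac{x}{1-x-x^2}\bigr)^3$, the generating function of $\sum_{i+j+k=a+1}F_iF_jF_k$; the average in (4) then follows by dividing by the count $F_{a+1}$ from part (1).

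I expect the real work to sit in two places: the hands-on case analysis behind (2)--(3), where the floor function and the parity of $a$ must be matched exactly, and the generating-function identity in (4), where the bookkeeping and the Fibonacci-index conventions need care. By contrast the conceptual heart is the one-line structural lemma, after which the whole theorem reduces to counting sparse subsets of an interval.
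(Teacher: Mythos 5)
Your proposal is correct, and for parts (1)--(3) it is essentially the paper's argument in different clothing: your structural lemma ($\lambda$ with distinct parts is an $(a,a+1)$-core iff its first-column hook set $H$ is a sparse subset of $\{1,\dots,a-1\}$) is exactly Theorem~\ref{thm.s=1.abacus} read off the abacus ($d_i\in\{0,1\}$ with sparse support, the support being $H$), and both proofs of (2)--(3) reduce to maximizing the same parabola $\tfrac12 k(2a+1-3k)$ over $k$ after observing that for fixed $k$ the extremal configuration is unique --- you via the classical bijection from sparse $k$-subsets of $[a-1]$ to arbitrary $k$-subsets of $[a-k]$ with $|\lambda|=\sum_{g\in G}g$, the paper via gap variables $g_j$ and the identity $\mathrm{size}(\kappa)=\tfrac12 n(2a+1-3n)-\sum_j jg_j$. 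The one place where you genuinely diverge is the finish of part (4): after the same double-counting step (your $\binom{a-k+1}{2}\binom{a-k-1}{k-1}$ equals the paper's $\binom{a-n}{n}\tfrac{n(a-n+1)}{2}$), you evaluate the ordinary generating function of the total size directly via the substitution $y=x+x^2$ and the expansion of $(1-y)^{-3}$, whereas the paper expresses $\Phi_a$ through $q$-derivatives of its graded Fibonacci polynomial $F^{(1)}_a(q)$ at $q=1$ and differentiates $G^{(1)}(x;q)=1/(1-x-qx^2)$. Your route is shorter and self-contained; the paper's buys a reusable refinement (the $area$-graded polynomials and their recurrence), which it wants anyway for Sections~\ref{sec.r=1}--\ref{section.bigraded}. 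The only slip worth flagging is notational: in your generating-function step the exponent bookkeeping only balances if $\Gamma(x)=\sum_a x^a\Phi_a$ (so that $\sum_m\binom{m+1}{2}x^{m+1}(1+x)^{m-1}=x^2(1-x-x^2)^{-3}=x^{-1}\bigl(x/(1-x-x^2)\bigr)^3$); state that convention explicitly, since the target $\sum_{i+j+k=a+1}F_iF_jF_k$ naturally lives at $x^{a+1}$.
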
 

Part (1) of the above theorem was independently proved by A.Straub~\cite{Straub.16}.

Another interesting conjecture of Amdeberhan is the number of $(2k-1, 2k+1)$- cores with distinct parts. This conjecture have been proven by Yan, Qin, Jin and Zhou~\cite{YQJZ.16}:

\begin{theorem} (YQJZ,16)
\label{thm.r=2}
The number of $(2k-1,2k+1)$-cores with distinct parts is equal to $2^{2k-2}$.
\end{theorem}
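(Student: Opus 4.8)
The plan is to route the question through Anderson's bijection and the abacus (beta‑set) description, turning it into a problem about rational Dyck paths, and then enumerate the relevant paths. The first step is to record the standard dictionary: a partition $\lambda$ with $\ell=\ell(\lambda)$ parts is an $(a,b)$‑core if and only if its set of first‑column hook lengths $H(\lambda)=\{\lambda_i+\ell-i:1\le i\le\ell\}$ (the beta‑set read off the abacus) is a down‑set of the poset $P_{a,b}$ whose ground set is the gap set $G$ of the numerical semigroup $\langle a,b\rangle=\mathbb{Z}_{\ge0}a+\mathbb{Z}_{\ge0}b$, ordered by $g\le g'\iff g'-g\in\langle a,b\rangle$; and that $P_{a,b}$ is isomorphic to the poset of lattice cells below the diagonal in the $a\times b$ rectangle, whose down‑sets are exactly the $(a,b)$‑Dyck paths. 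The one purely partition‑theoretic observation I then need is: $\lambda$ has distinct parts if and only if $H(\lambda)$ contains no two consecutive integers, because $\lambda_i=\lambda_{i+1}$ occurs precisely when $\lambda_i+\ell-i$ and $\lambda_{i+1}+\ell-(i+1)$ are consecutive. Thus the theorem becomes: count the down‑sets of $P_{2k-1,2k+1}$ (equivalently, the $(2k-1,2k+1)$‑Dyck paths) for which the associated subset of $G$ has no two consecutive integers.

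Next I would analyze the structure of $P_{2k-1,2k+1}$. The key facts to establish are: (i) the minimal elements of $P_{a,b}$ are exactly $1,2,\dots,a-1$, so for $a=2k-1$ the bottom layer of an admissible down‑set must be an independent set of the path $1-2-\cdots-(2k-2)$ — this is the source of the "Fibonacci flavour'' seen in the $(a,a+1)$ case; and (ii) since $b-a=2$, adding a generator to a gap shifts it by $2k-1$ or $2k+1$, and a gap $g$ together with the gap $g+1$ can both lie in a down‑set only through the two integer solutions of $ax+by=\pm1$, which translate into two explicit "local moves'' between cells of the Dyck‑path region. Using (i)–(ii) I would rewrite the no‑two‑consecutive‑integers condition as a local condition forbidding certain relative positions of cells of the path, and, more importantly, use it to set up a recursive decomposition of the admissible paths.

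The enumeration itself I would carry out by a first‑return–type decomposition of the admissible Dyck paths (equivalently, by peeling the largest part off the core and tracking how the down‑set continues above the two largest minimal gaps $2k-2$ and $2k-3$), aiming to prove the recursion $f(k)=4\,f(k-1)$ with $f(1)=1$, which gives $f(k)=2^{2k-2}=4^{k-1}$; the fibres of size four should be in bijection with the $(2k-3,2k-1)$‑case, and an equivalent route is a direct bijection between $(2k-1,2k+1)$‑cores with distinct parts and binary strings of length $2k-2$. I expect the main obstacle to be exactly this last step: $P_{2k-1,2k+1}$ is not a product of chains, so the bound $2^{2k-2}$ does not factor for free, and one must control the interaction between the independence condition on the bottom layer $\{1,\dots,2k-2\}$ and the chains of larger gaps sitting above it — the crux is to show that, after imposing the no‑consecutive‑integers constraint, these larger gaps contribute precisely enough additional freedom to upgrade the Fibonacci count $F_{2k}$ of the $(a,a+1)$ situation to the power of two.
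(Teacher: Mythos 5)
Your reduction is sound and matches the paper's first step: passing to $\beta$-sets/Dyck paths, noting that distinct parts means no two consecutive integers in the $\beta$-set, and observing that by $(2k-1)$-nestedness this condition only needs to be checked on the residues in $[2k-2]$ (if $j,j+1\in\alpha(\pi)$ then their reductions mod $2k-1$ are consecutive elements of $\alpha(\pi)\cap[2k-1]$). That is exactly the content of the paper's first lemma, even if your item (ii) about solutions of $ax+by=\pm1$ is phrased more vaguely than it needs to be.

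The genuine gap is the enumeration itself, and you have flagged it yourself as ``the main obstacle.'' Everything after the reduction is a plan, not a proof: you propose a first-return decomposition that would yield $f(k)=4f(k-1)$, assert that ``the fibres of size four should be in bijection with the $(2k-3,2k-1)$-case,'' and give no construction of that four-to-one map. Nothing in the setup makes it evident that peeling off the largest part (or the two largest minimal gaps $2k-3,2k-2$) produces exactly four continuations landing in the $(2k-3,2k-1)$ problem; the interaction you worry about -- between the independence condition on the bottom layer and the chains of larger gaps above it -- is precisely where the work lies, and it is left undone. For comparison, the paper resolves this not by a recursion but by a surgery on the upper triangle of $R_{2k-1,2k+1}$: it cuts the region into pieces $A$, $B$, $C$, reflects $A\cup C$ across the diagonal, reassembles a rectangle $P_k=C\cup B\cup A^T\cup C^T$, shows the admissible Dyck paths biject with lattice paths in $P_k$ satisfying a symmetry condition $C(\zeta)=C^T(\zeta)$, and then evaluates the count as $\sum_{i'=k}^{2k-1}\binom{2k-1}{i'}=2^{2k-2}$ by a binomial identity. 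A recursion $f(k)=4f(k-1)$ is closer in spirit to the bijective proof of Baek--Nam--Yu cited in the paper, so your route is not hopeless, but as written the crucial counting step is missing and the proof is incomplete.
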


The proof uses somewhat complicated arguments about the poset structure of cores. Results by Zaleski and Zeilberger~\cite{ZZ.16} improve the argument using Experimental Mathematics tools in Maple. More recently Baek, Nam and Yu provided simpler bijective proof in~\cite{BNY.17}.

Another set of combinatorial objects that caught the attention of a number of researchers \cite{Straub.16,NS.16,Zaleski.17} is the set of $(a,as-1)$-cores with distinct parts. In particular, there is a Fibonacci-like recursive relation for the number of such cores:

\begin{theorem} (Straub, 16) 
The number $N_s (a)$ of $(a, as-1)$-core partitions into distinct parts is characterized by $N_s(1) = 1, \ N_s(2) = s$ and, for $a \ge 3$,
$$N_s(a) = N_s(a-1) + s N_s(a-2).$$
\end{theorem}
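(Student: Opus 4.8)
The plan is to set up the abacus / beta-set description of $(a,as-1)$-cores and translate the ``distinct parts'' condition into a combinatorial condition on a finite window, then extract the recursion. First I would recall that an $a$-core is determined by its set of beta-numbers on the abacus with $a$ runners, and being simultaneously an $(as-1)$-core imposes a second flush condition; following Anderson, the simultaneous cores correspond to certain order ideals (or lattice paths) inside the $a \times (as-1)$ rectangle. The key observation is that when $b = as-1$, the geometry of this rectangle is especially simple: the line of slope $a/b$ stays very close to slope $1/s$, so the order ideal is controlled by how it meets each of the $s$ ``bands'' of the rectangle. I would make this precise by showing that an $(a,as-1)$-core with distinct parts is encoded by a sequence $(\epsilon_1,\dots,\epsilon_{a-1})$ with $\epsilon_i \in \{0,1,\dots,s\}$ subject to a local constraint (no two consecutive nonzero entries, or something of that flavor), exactly as in Xiong's and Straub's treatment of the $s=1$ case.

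The main step is then the bijective/recursive argument. Having the encoding as restricted integer sequences of length $a-1$, I would partition these sequences according to the value of the last entry $\epsilon_{a-1}$: if $\epsilon_{a-1}=0$ we are left with a valid sequence of length $a-2$, contributing $N_s(a-1)$; if $\epsilon_{a-1} \in \{1,\dots,s\}$ (the $s$ choices), the local constraint forces $\epsilon_{a-2}=0$, so we are left with a valid sequence of length $a-3$, contributing $s\,N_s(a-2)$. This yields $N_s(a) = N_s(a-1) + sN_s(a-2)$ directly. The base cases $N_s(1)=1$ (empty partition only) and $N_s(2)=s$ (the single runner can be filled in $s+1$ ways minus... or rather there are exactly $s$ choices for $\epsilon_1$ giving distinct parts, plus the empty one — this needs to be checked against the convention so that it reads $N_s(2)=s$) are verified by hand from the explicit list of $(2,2s-1)$-cores.

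I would carry out the steps in this order: (1) recall the abacus model and Anderson's bijection for simultaneous cores; (2) show that for $b=as-1$ the core is determined by a length-$(a-1)$ sequence over $\{0,1,\dots,s\}$; (3) characterize exactly which sequences give \emph{distinct} parts — this is where one must be careful, translating the ``strictly decreasing parts'' condition into the local rule on the $\epsilon_i$; (4) do the last-entry case analysis to get the recursion; (5) check the base cases. The step I expect to be the genuine obstacle is (3): proving that the distinct-parts condition is \emph{equivalent} to a clean local constraint on consecutive $\epsilon_i$, rather than some messier global condition, and handling the boundary behavior near the two ends of the abacus window correctly. Everything after that is a routine two-line recursion.
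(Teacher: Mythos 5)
Your overall strategy --- encode the core by its $a$-abacus vector, translate ``distinct parts'' into a sparsity condition on the support, and peel off one coordinate to obtain the recursion --- is exactly the framework of this paper (Proposition~\ref{prop.distinct}, Proposition~\ref{prop.sim}, and their combination in Theorem~\ref{thm.description2}). However, step (2) of your plan contains a concrete error that then breaks step (4). For $b=as-1$ the encoding is \emph{not} a sequence over a uniform alphabet $\{0,1,\dots,s\}$: writing $b=(s-1)a+(a-1)$ and running the $b$-core condition through the abacus gives $d_i\le d_{i+1}+s$ for $i\le a-2$ but $d_{a-1}\le d_0+(s-1)$, so under sparsity the constraints are $d_i\in\{0,1,\dots,s\}$ for $i=1,\dots,a-2$ and $d_{a-1}\in\{0,1,\dots,s-1\}$. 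Your own base-case check already detects this: a single coordinate ranging over $\{0,\dots,s\}$ would give $s+1$ cores rather than the required $N_s(2)=s$; the ``convention'' issue you flag is precisely the missing bound $d_1\le s-1$. With a uniform alphabet your case analysis would instead prove the recursion for the $(a,as+1)$ count, whose second value is $s+1$, not $s$.

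Consequently the last-entry peeling as you describe it fails: the nonzero values of $d_{a-1}$ number $s-1$, not $s$, and after deleting the last one or two coordinates the surviving sequence no longer carries the special bound at its top index, i.e.\ it is an object of a different type, so the recursion does not close on $N_s$ alone. The repair is short once the correct characterization is in hand: peel from the \emph{other} end. If $d_1=0$, the vector $(d_2,\dots,d_{a-1})$, reindexed, satisfies exactly the length-$(a-1)$ version of the same conditions (the special bound stays attached to the top index), contributing $N_s(a-1)$; if $d_1\in\{1,\dots,s\}$ ($s$ genuine choices, since index $1$ carries the unrestricted bound when $a\ge 3$), sparsity forces $d_2=0$ and the remainder contributes $N_s(a-2)$. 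This yields $N_s(a)=N_s(a-1)+sN_s(a-2)$, with $N_s(1)=1$ and $N_s(2)=s$ read off directly from the characterization. Alternatively, one can keep peeling from the top but must then track the $(a,as-1)$ and $(a,as+1)$ counts simultaneously as a coupled pair of recursions.
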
 

In this paper, we analyze simultaneous core partitions in the context of Anderson's bijection and in Section \ref{section.description} we provide a simple description of the set of $(a,as+1)$-cores with distinct parts in terms of abacus diagrams, which also allows us to provide another proof of Theorem~\ref{thm.s=1} parts (1), (2) and (3) in Section \ref{sec.s=1}. 

In Section~\ref{sec.r=2} we use the connection between cores and Dyck paths to provide another simple proof of Theorem~\ref{thm.r=2}. 

In Section~\ref{sec.r=1} we introduce graded Fibonacci numbers
\begin{equation*}
F_{a,b}(q) = \sum_{\kappa} q^{area(\kappa)},
\end{equation*}
where the sum is taken over all $(a,b)$-cores $\kappa$ with distinct parts and $area$ is some statistic on $(a,b)$-cores. We show that $F_{a,a+1} (1) = F_{a+1}$- the regular Fibonacci sequence, and prove recursive relations for $F^{(s)}_a (q) := F_{a,as+1} (q)$. Using properties of $F_{a,a+1} (q)$ we provide another proof of Theorem~\ref{thm.s=1} part (4). 

In Section~\ref{section.bigraded} we introduce bigraded Fibonacci numbers as a summand of bigraded Catalan numbers:
\begin{equation*}
F^{(s)}_a (q,t) = \sum_{\pi} q^{area(\pi)} t^{bounce(\pi)},
\end{equation*}
where the sum is taken over all $(a,as+1)$- Dyck paths corresponding to $(a,as+1)$-cores with distinct parts, and statistics $(area, bounce)$ are two standard statistics on Dyck paths (see~\cite{Loehr.03}).

Using abacus diagrams, we can get a simple formula for $F^{(s)}_a (q,t)$ and prove a theorem that gives recursive relations similar to the recursive relations for regular Fibonacci numbers. We use the standard notation $(s)_r = 1+r +\ldots+r^{s-1}$.
\begin{theorem}
Normalized bigraded Fibonacci numbers $\tilde F^{(s)}_a (q,t)$ satisfy the recursive relations
\begin{equation*}
\tilde F^{(s)}_{a+1} (q,t) = \tilde F^{(s)}_{a} (q,t) + qt^a \left(s\right)_{qt^a} \tilde F^{(s)}_{a-1} (q,t) = \tilde F^{(s)}_{a} (qt,t) + qt \left(s\right)_{qt} \tilde F^{(s)}_{a-1} (qt^2,t),
\end{equation*}
with initial conditions $\tilde F^{(s)}_0 (q,t) =\tilde F^{(s)}_1(q,t) = 1$.
\end{theorem}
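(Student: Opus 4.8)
The plan is to work directly from the abacus description of $(a,as+1)$-cores with distinct parts promised in Section~\ref{section.description}, together with the statement (from Section~\ref{sec.r=1}, which I may assume) that the graded version $F^{(s)}_a(q) = F_{a,as+1}(q)$ satisfies a Fibonacci-type recursion with a weight of the form $q \cdot (s)_{q}$-ish correction. The key new ingredient is that on the Dyck-path side we track the second statistic $bounce$, and the main task is to understand how a single step of the recursion --- the combinatorial operation that builds an $(a+1, (a+1)s+1)$-core (or its Dyck path) out of the data for the smaller parameters --- transforms the pair $(area, bounce)$. Concretely, I would first make explicit the bijection between $(a,as+1)$-cores with distinct parts and a restricted family of $(a,as+1)$-Dyck paths (the ones appearing in the definition of $F^{(s)}_a(q,t)$), and read off from the abacus what $area$ and $bounce$ become in the abacus coordinates; experience with these Anderson-bijection arguments suggests both statistics become simple linear functions of the positions of the beads, so that $F^{(s)}_a(q,t)$ has a product-like or small-sum closed form in $q,t$.

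Next I would set up the recursion at the level of these abacus diagrams. The diagrams for parameter $a+1$ split into two classes according to the status of the ``largest'' bead/runner (the same dichotomy that produces the $N_s(a) = N_s(a-1) + sN_s(a-2)$ recursion of Straub): one class is in bijection with the parameter-$a$ diagrams, contributing $\tilde F^{(s)}_a(q,t)$ after a shift; the other class is in bijection with $s$ copies of the parameter-$(a-1)$ diagrams, contributing the factor $qt^a\,(s)_{qt^a}$ --- here the $s$ copies correspond to the $s$ choices allowed because the modulus jumped by $as+1$ rather than $a+1$, and the geometric-series factor $(s)_{qt^a} = 1 + qt^a + \cdots + (qt^a)^{s-1}$ records how each successive choice shifts $area$ by $1$ and $bounce$ by $a$. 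Establishing that these two shifts are exactly $q \mapsto q$ with an overall $qt^a$ versus, in the second presentation, a reparametrization $q\mapsto qt$ (resp. $q\mapsto qt^2$) is the crux: it amounts to checking that adding a bead to the abacus increments $area$ by a fixed amount and $bounce$ by the current value of $a$ (or $2$, in the normalization where one instead rescales $q$), and this is a direct bookkeeping computation once the statistics are expressed in abacus coordinates.

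The two equal expressions on the right-hand side of the theorem then correspond to two equivalent ways of applying the shift: either we keep the variable $q$ and absorb the $bounce$-increment into an explicit monomial $t^a$ (first form), or we push the increment into the arguments, writing $\tilde F^{(s)}_a(qt,t)$ and $\tilde F^{(s)}_{a-1}(qt^2,t)$ (second form). So after proving the first equality I would verify the second purely formally, by checking that the substitution $q \mapsto qt^{\,j}$ inside $\tilde F^{(s)}_a$ produces exactly the monomial prefactor $t^{\,j\cdot(\text{something})}$ that matches $t^a$ and $t$ --- i.e. that the normalization $\tilde F$ was chosen precisely so that each $area$-unit carries a compensating power of $t$ equal to the ``height'' at which the bead sits. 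The initial conditions $\tilde F^{(s)}_0 = \tilde F^{(s)}_1 = 1$ are immediate since there are no nonempty cores in those cases (the single empty core contributes $q^0 t^0 = 1$).

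I expect the main obstacle to be the explicit identification of $bounce$ in abacus coordinates: $area$ for cores/Dyck paths is classically linear in the bead positions, but $bounce$ is defined through the iterative bounce path and is not obviously linear. The saving grace is that we are restricting to the very special family of Dyck paths coming from distinct-part cores, which (from the Section~\ref{section.description} description) should be exactly the paths that ``hug'' the diagonal in a controlled staircase pattern; for such paths the bounce path is forced, and $bounce$ collapses to a linear expression in the same bead data. Pinning down that restricted family precisely, and verifying the bounce computation on it, is where the real work lies; once that is done, the recursion and both of its equivalent forms follow by the two-class decomposition and a formal substitution check as sketched above.
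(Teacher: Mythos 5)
Your plan is essentially the paper's proof: the paper first establishes by induction on $a$ the linear formula $bounce(d) = s\binom{a}{2} - \sum_{i}(a-i)d_i$ (using sparsity of $d$ to force the first $s$ or $2s$ iterations of the bounce path, exactly as you anticipate), which together with a reflection symmetry of $\mathcal{A}^{(s)}_{a}$ gives $\tilde F^{(s)}_a(q,t) = \sum_{d\in\mathcal{A}^{(s)}_{a}} q^{\sum d_i}t^{\sum i d_i} = \sum_{d\in\mathcal{A}^{(s)}_{a}} (qt^{a})^{\sum d_i}t^{-\sum i d_i}$, and then both recurrences follow from your same split on $d_a=0$ versus $d_a>0$, the two forms of the sum yielding the two presentations. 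The one step you leave unexecuted, verifying that the bounce path is forced and that $bounce$ is linear in the bead data, is indeed where the real work lies, but your expectation of how it goes is correct.
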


\subsection*{Acknowledgments}
The author would like to thank Evgeny Gorskiy, Anne Schilling and Tewodros Amdeberhan for suggesting the problem and providing helpful discussions and comments.
This research was partially supported by NSF grant DMS-1500050.

\section{Background and notation}
\label{section.background}

For two coprime numbers $a$ and $b$ consider a rectangle $R_{a,b}$ on square lattice with bottom-left corner at the origin and top-right corner at $(a,b)$. We call the diagonal from $(0,0)$ to $(a,b)$ the \defn{main diagonal} of the rectangle $R_{a,b}$. An \defn{$(a,b)$-Dyck path} is a lattice path from $(0,0)$ to $(a,b)$ that consists of North and East steps and that lies weakly above the main diagonal. Denote the set of $(a,b)$-Dyck paths by $\mathrm{D}_{a,b}$.
 
For a box in $R_{a,b}$ with bottom-right corner coordinates $(x,y)$, define the \defn{rank of the box} to be equal to $ay-bx$ (see Fig.~\ref{figure.anderson}, left). Note that a box has positive rank if and only if it lies above the main diagonal. For a rational Dyck path $\pi$, we define the \defn{area statistic} $area(\pi)$ to be the number of boxes in $R_{a,b}$ with positive ranks that are below $\pi$. 

\begin{figure}[h]
\includegraphics[scale=0.38]{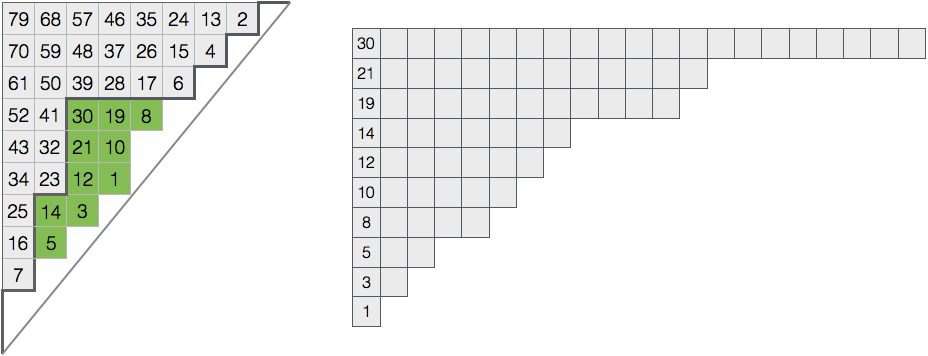} \centering
\caption{$(9,11)$-Dyck path $\pi$ and $(9,11)$-core $\kappa$ with $\mathbf{core}(\pi) = \kappa$.} \centering
\label{figure.anderson}
\end{figure}

Denote the set of ranks of all the area boxes of $\pi$ as \defn{$\alpha(\pi)$}. Note that $\alpha(\pi)$ doesn't contain any multiples of $a$ or $b$ and it has an \defn{$(a,b)$-nested property}, i.e.
\begin{equation}
\label{equation.nested}
(i \in \alpha(\pi),\ i>a) \Rightarrow i-a \in \alpha(\pi), \quad (j \in \alpha(\pi),\ j>b) \Rightarrow j-b \in \alpha(\pi).
\end{equation}

Also note that $\alpha(\pi)$ completely determines the Dyck path $\pi$.

\begin{remark}
The $(a,b)$-nested property of $\alpha(\pi)$ is equivalent to the $(a,b)$-invariant property of the complement of $\alpha(\pi)$ (see~\cite{GMV.16}).
\end{remark}

Consider $i \in \{0,\ldots, a-1\}$. If we can find a column in $R_{a,b}$ with a box of rank $i$, define $e_i (\pi)$ to be the number of boxes in that column below $\pi$ and above the main diagonal. If there is no box of rank $i$ (i.e. if $b|i$), define $e_i(\pi)$ to be zero.  Note that 
$$e_i(\pi) = \big| \big\{ x \in \alpha(\pi)\ |\ x  \equiv i \ (\mathrm{mod}\ a) \big\} \big|.$$ 
The vector $e(\pi) = (e_0 (\pi), \ldots, e_{a-1} (\pi))$ is defined to be the \defn{area vector} of $\pi$. Note that $e_0(\pi)$ is always zero.

A \defn{partition} $\lambda$ of $n$ is a finite non-increasing sequence $(\lambda_1, \lambda_2, \ldots, \lambda_l)$ of positive integers, which sum up to $n$. Integers $\lambda_i$ are called \defn{parts} of the partition $\lambda$ and $n$ is called the \defn{size} of the partition. A partition $\lambda$ is sometimes represented by its Young diagram (we will use English notation) (see Fig.~\ref{figure.anderson}, right). The \defn{hook length} of a box in the Young diagram of $\lambda$ is defined to be the number of boxes directly below and directly to the right of the given box, including that box itself. 

We say that a partition $\kappa$ is an \defn{$(a,b)$-core} if there are no boxes in the Young diagram of $\kappa$ with hook length equal to $a$ or $b$. Denote the set of all $(a,b)$-cores as $\mathrm{K}_{a,b}$. 
For example, a partition $\kappa = (21, 13, 12, 8, 7, 6, 5, 3, 2,1)$ belongs to $\mathrm{K}_{9,11}$ (see Fig.~\ref{figure.anderson}).

Define the size statistic on cores \defn{$\mathrm{size}(\kappa)$} to be the sum of all parts of $\kappa$.
Given an $(a,b)$-core $\kappa$, denote the set of hook lengths of the boxes in the first column by \defn{$\beta(\kappa)$}. It is not hard to show that $\beta(\kappa)$ also satisfies $(a,b)$-nested property (\ref{equation.nested}) (in fact, the nested condition of $\beta(\kappa)$ and the fact that $\beta(\kappa)$ doesn't contain multiples of $a$ or $b$ are sufficient for $\kappa$ to be an $(a,b)$-core). Note that the set $\beta(\kappa)$ completely determines the core $\kappa$.

For two coprime numbers $a$ and $b$, there is a bijection \defn{$\mathbf{path}$}$\colon\mathrm{K}_{a,b} \to \mathrm{D}_{a,b}$ due to J.Anderson~\cite{Anderson.02}.  We can describe the map $\mathbf{path}$ by specifying how it acts on sets $\alpha$ and $\beta$, namely $\alpha(\mathbf{path} (\kappa)) = \beta(\kappa)$. 

We also denote the map \defn{$\mathbf{core}$}$\colon\mathrm{D}_{a,b}\to\mathrm{K}_{a,b}$ by $\mathbf{core} = \mathbf{path}^{-1}$. It follows that $\beta(\mathbf{core}(\pi)) = \alpha(\pi)$. 

Define the corresponding area statistic on cores as \defn{$area(\kappa)$}$~:=area(\mathbf{path} (\kappa))$. Note that $area(\kappa) = \left\vert{\alpha(\mathbf{path}(\kappa))}\right\vert=\left\vert{\beta(\kappa)}\right\vert$ is equal to the number of rows in partition $\kappa$. Similarly, an \defn{area vector} of $\kappa$ is defined as $c(\kappa) = \left(c_0 (\kappa),\ldots, c_{a-1} (\kappa)\right)$ with $c_i(\kappa)$ equal to the number of elements in $\beta(\kappa)$ with residue $i$ modulo $a$. 

The third construction is the abacus diagram of an $a$-core. An \defn{$a$-abacus diagram} consists of $a$ rows called runners indexed by $\{0,1, \ldots, a-1\}$. Each row represents a real line with integer positions filled with black or white beads (see Fig.~\ref{figure.abacus}, right).

There is a correspondence between partitions and abacus diagrams. Let $\lambda$ be the Young diagram of a partition in Russian notation with each row going in North-East direction (see Fig.~\ref{figure.abacus}, left). Reading from left to right, the boundary of $\lambda$ (denoted by \defn{$\partial(\lambda)$}) consists of NE-steps and SE-steps. We enumerate the steps so that the first NE-step has number 0, and the enumeration is consecutively increasing for the steps going from left to right. 

A corresponding $a$-abacus diagram is constructed by filling position $k$ on runner $i$ with a black bead if the step $i + ak$ of $\partial(\lambda)$ is an SE-step and filling that position with a white bead otherwise. According to our construction all negative positions on runners are filled with black beads and $0$-th position on $0$-th runner is always filled with a white bead. The importance of that construction becomes clear when we let partition $\lambda$ to be an $a$-core.

\begin{figure}[t]
\includegraphics[scale=0.38]{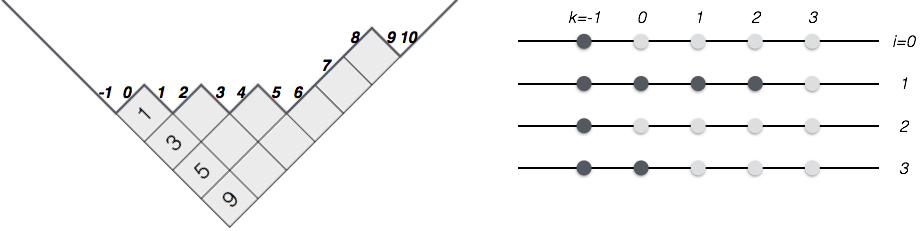} \centering
\caption{$(4,13)$-core $(6,3,2,1)$ and the corresponding abacus diagram with $d=(0,3,0,1)$.} \centering
\label{figure.abacus}
\end{figure}

\begin{proposition}
\label{prop.core}
Partition $\lambda$ is an $a$-core if and only if for any SE-step $j$ in $\partial(\lambda)$ step $j-a$ is also an SE-step.
\end{proposition}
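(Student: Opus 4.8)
The plan is to translate the abacus construction into a statement about the boundary path $\partial(\lambda)$ and then recognize the hook-length condition in those terms. First I would recall that the boundary $\partial(\lambda)$ is an infinite lattice path whose steps are indexed by $\mathbb{Z}$, with all sufficiently negative steps being SE-steps (these are the ``below the diagram'' part) and all sufficiently positive steps being NE-steps, and with the convention fixed in the excerpt that the first NE-step is step $0$. The key elementary fact I would establish (or invoke as standard) is the bijection between the boxes of the Young diagram of $\lambda$ and pairs $(i,j)$ with $i<j$, $i$ an NE-step, $j$ an SE-step of $\partial(\lambda)$: the box lying at the ``inner corner'' determined by the horizontal run containing NE-step $i$ and the vertical run containing SE-step $j$ has hook length exactly $j-i$. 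This is the standard identification of hook lengths with ``gaps'' in the beta-set (first-column hook lengths), reindexed so that it works for all boxes, not just the first column.

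Given that fact, the proof is a short logical equivalence. Suppose $\lambda$ is an $a$-core, i.e. no box has hook length $a$. If some SE-step $j$ had the property that step $j-a$ is an NE-step, then the pair $(j-a,\,j)$ would be a valid (NE, SE) pair with $j-(j-a)=a$, producing a box of hook length $a$ — contradiction. Hence for every SE-step $j$, step $j-a$ must also be an SE-step. Conversely, if $\lambda$ has a box of hook length $a$, it corresponds to a pair $(i,j)$ with $j-i=a$, $i$ an NE-step and $j$ an SE-step; then $j-a=i$ is an NE-step, so the displayed condition fails. This gives the biconditional. I would phrase both directions around the contrapositive to keep it symmetric, and I'd note explicitly that the condition is vacuous for sufficiently negative $j$ (where both $j$ and $j-a$ are SE-steps) and automatically satisfied for small indices, so there is no edge-case subtlety.

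The main obstacle — really the only thing requiring care — is making the box/(NE,SE)-pair/hook-length correspondence precise and correctly normalized with the excerpt's specific step-numbering convention (first NE-step numbered $0$, negative positions all black/SE). I would either cite this as the classical description of hook lengths via the boundary word (the same data as the beta-set, just shifted), or include a one-line justification: walking along a box's hook, the rightmost box of its row corresponds to an SE-step of $\partial(\lambda)$, the bottom box of its column to an NE-step, and the number of steps strictly between them along the boundary equals the arm plus leg plus one, i.e. the hook length, which is exactly $j-i$. Once that dictionary is in place, Proposition~\ref{prop.core} is immediate, and it is worth remarking that it is simply the ``$a$-half'' of the nested property~(\ref{equation.nested}) for $\beta(\kappa)$ extended from the first column to the whole diagram, which is why it characterizes $a$-cores.
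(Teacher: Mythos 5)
Your proof is correct and follows essentially the same route as the paper's: both rest on the bijection between boxes of $\lambda$ and pairs (NE-step $k$, SE-step $j$) of $\partial(\lambda)$ with $k<j$, under which the hook length is $j-k$, after which the equivalence is immediate. The paper states this correspondence via the functions $se(x)$ and $ne(x)$ and argues both directions exactly as you do.
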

\begin{proof}
Given a box $x$ of a Young diagram in Russian notation of partition $\lambda$, there is an SE-step of $\partial(\lambda)$ directly in the NE direction of $x$ with number $se(x)$ and there is an NE-step of $\partial(\lambda)$ directly in the SE direction of $x$ with number $ne(x)$. The hook length of box $x$ is then given by $se(x)-ne(x)$.

($\Leftarrow$)  If for every SE-step $j$ the step $j-a$ is also an SE-step, there is no box $x$ with $se(x)-ne(x) = a$ and $\lambda$ is an $a$-core.

($\Rightarrow$) Conversely, for any pair of SE-step $j$ and NE-step $k$ with $j > k$, there exists a box $x$ with $se(x) = j$ and $ne(x) = k$. Thus, if there are no boxes $x$ of hook length $a$, there are no pairs $(j,k)$ with $j-k=a$, and for any SE-step $j$ the step $j-a$ must be also SE.
\end{proof}

\begin{corollary}
\label{corollary.acore}
A partition $\lambda$ is an $a$-core if and only if the set of black beads in the corresponding $a$-abacus diagram is left-justified, i.e. for any runner $i$ there exist an integer $d_i \ge 0$ such all positions $k < d_i$ are filled with black beads and all positions $k \ge d_i$ are filled with white beads.
\end{corollary}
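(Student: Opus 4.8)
The plan is to read the corollary off directly from Proposition~\ref{prop.core} by translating the condition ``every SE-step $j$ of $\partial(\lambda)$ has $j-a$ also an SE-step'' into the bead language. Recall that in the $a$-abacus construction, position $k$ on runner $i$ holds a black bead precisely when step $i+ak$ of $\partial(\lambda)$ is an SE-step. The key observation is that the steps lying on a fixed runner $i$ are exactly those with index $\equiv i \pmod a$, i.e. the numbers $i+ak$ for $k\in\mathbb{Z}$, and that step $i+ak$ and step $i+a(k-1)$ differ by $a$. Hence the implication ``step $j$ is SE $\Rightarrow$ step $j-a$ is SE'', quantified over all $j$, is equivalent, runner by runner, to: for every $i$ and every $k$, if position $k$ on runner $i$ is black then so is position $k-1$.

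Granting this translation I would argue both directions. If the black beads are left-justified, with the black positions on runner $i$ being exactly $\{k : k<d_i\}$, then $k<d_i$ implies $k-1<d_i$, so the above implication holds and Proposition~\ref{prop.core} yields that $\lambda$ is an $a$-core. Conversely, if $\lambda$ is an $a$-core, Proposition~\ref{prop.core} gives the implication, so on each runner the set of black positions is closed under $k\mapsto k-1$, hence is a down-set of the form $\{k : k<d_i\}$ with $d_i\in\mathbb{Z}\cup\{-\infty,+\infty\}$. To pin $d_i$ down to a nonnegative integer one invokes the boundary behaviour of $\partial(\lambda)$: since step $0$ is by construction the first NE-step, every step of negative index is an SE-step, so every negative position on every runner is black, forcing $d_i\ge 0$; and since $\lambda$ has finitely many boxes, all sufficiently large steps are NE-steps, so all sufficiently large positions on each runner are white, forcing $d_i<\infty$. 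Combining, the black positions on runner $i$ are exactly $\{k : k<d_i\}$ with $d_i$ a nonnegative integer, which is the claimed left-justification.

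Since the argument is essentially the bookkeeping of a change of indexing, I do not expect a genuine obstacle; the only points that require attention are making the correspondence between ``step $i+ak$ of $\partial(\lambda)$'' and ``position $k$ on runner $i$'' fully precise, and verifying the two boundary properties of $\partial(\lambda)$ (all negative-index steps SE, all sufficiently large steps NE) that guarantee $0\le d_i<\infty$.
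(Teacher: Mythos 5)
Your proposal is correct and follows essentially the same route as the paper: translate Proposition~\ref{prop.core}'s condition ``step $j$ SE $\Rightarrow$ step $j-a$ SE'' runner by runner into ``position $k$ black $\Rightarrow$ position $k-1$ black,'' which is exactly left-justification. The only difference is that you spell out the boundary facts (all negative positions black, all large positions white) guaranteeing $0\le d_i<\infty$, which the paper takes as already established in its construction of the abacus diagram.
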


\begin{proof}
Remember that a black bead of an abacus diagram of $\lambda$ on a runner $i$ in position $k$ corresponds to an SE-step $j = i + ak$ of $\partial(\lambda)$. 

Note that an abacus diagram is left-justified if and only if for any black bead on runner $i$ in position $k$, the bead in position $k-1$ is also a black one. In turn, that is equivalent to the fact that for any SE-step $j = i+ak$ of $\partial(\lambda)$ the step $j-a = i+ a(k-1)$ is also an SE-step.
\end{proof}

Given integers $d_i$ from Corollary~\ref{corollary.acore}, denote $d = (d_0, d_1, \ldots, d_{a-1})$. It is useful to think about $d_i$ as a number of black beads in nonnegative positions of a runner $i$. We will call $d$ the \defn{abacus vector} of an $a$-core $\lambda$.

Denote the map from $a$-cores $\lambda$ to integer $a$-dimensional abacus vectors $d$ by \defn{$\mathbf{abac}(\lambda)$}.

\begin{proposition}
Given an $(a,b)$-core $\kappa$, the $a$-abacus vector $(d_1,\ldots,d_{a-1}) = \mathbf{abac}(\kappa)$ is equal to the area vector $c(\kappa)$.
\end{proposition}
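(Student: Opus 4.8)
The plan is to observe that both entries $d_i$ and $c_i(\kappa)$ count the same objects — the elements of $\beta(\kappa)$ lying in a fixed residue class modulo $a$ — so that the asserted equality becomes a bookkeeping statement. By definition $c_i(\kappa)$ is the number of elements of $\beta(\kappa)$ congruent to $i$ modulo $a$. On the abacus side, position $k$ of runner $i$ carries a black bead exactly when step $i+ak$ of $\partial(\kappa)$ is an SE-step, so, since $0\le i\le a-1$, the quantity $d_i$ equals the number of SE-steps $j$ of $\partial(\kappa)$ with $j\ge 0$ and $j\equiv i\pmod{a}$. Thus the whole proposition follows once we prove the identity
\[
\{\,j\ge 0 : \text{step } j \text{ of }\partial(\kappa)\text{ is an SE-step}\,\}=\beta(\kappa),
\]
i.e. that the nonnegative SE-steps of $\partial(\kappa)$ are exactly the first-column hook lengths of $\kappa$.

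To prove this identity I would use the description of hook lengths recorded in the proof of Proposition~\ref{prop.core}: for a box $x$ of $\kappa$ the hook length is $se(x)-ne(x)$, where $se(x)$ (resp.\ $ne(x)$) is the index of the SE-step (resp.\ NE-step) of $\partial(\kappa)$ met on leaving $x$ in the NE (resp.\ SE) direction; and I would combine it with the observation that a box $x$ lies in the first column of $\kappa$ if and only if $ne(x)=0$. Indeed, leaving a first-column box in the SE direction one travels down the leftmost column and exits $\partial(\kappa)$ through its leftmost NE-step, which is step~$0$ by the chosen enumeration of $\partial(\kappa)$, whereas leaving a box of column $\ge 2$ one exits through a later NE-step. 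Granting this, the inclusion $(\subseteq)$ is immediate: if $x$ is a first-column box then $se(x)=h(x)+ne(x)=h(x)>0$, so $h(x)$ is a nonnegative SE-step. For $(\supseteq)$, take a nonnegative SE-step $j$; since step~$0$ is an NE-step we have $j\ge 1>0$, so by the other fact established in the proof of Proposition~\ref{prop.core} (for an SE-step $j$ and an NE-step $k<j$ there is a box $x$ with $se(x)=j$ and $ne(x)=k$), applied with $k=0$, there is a box $x$ with $se(x)=j$ and $ne(x)=0$; by the criterion above $x$ is a first-column box, hence $j=se(x)=h(x)\in\beta(\kappa)$. This proves the identity, and therefore $d_i=\#\{x\in\beta(\kappa):x\equiv i\pmod{a}\}=c_i(\kappa)$ for every $i$, in particular for $i=1,\dots,a-1$. (Consistently, $d_0=c_0=0$: $\beta(\kappa)$ contains no multiple of $a$ because $\kappa$ is an $a$-core, and by Corollary~\ref{corollary.acore} the set of black beads is left-justified while position~$0$ of runner~$0$ is white.)

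The step I expect to be the real obstacle is making the geometric characterization ``$x$ is in the first column $\iff ne(x)=0$'' fully rigorous: it is visually clear from the Russian-notation picture of $\partial(\kappa)$ and is confirmed by the worked examples, but a careful write-up has to keep track of the orientation conventions used to build $\partial(\kappa)$ and of the fact that the leftmost NE-step sits at the bottom of the first column. An alternative route would bypass $\partial(\kappa)$ and compare the first-column hook lengths $h(i,1)=\kappa_i+l-i$ (with $l$ the number of parts of $\kappa$) directly with the positions of the SE-steps attached to the rows of $\kappa$, which turn out to be the numbers $\kappa_i-i+l$, the empty rows accounting for the negative positions; but this amounts to the same edge-sequence bookkeeping, so I would present it through Proposition~\ref{prop.core} as above.
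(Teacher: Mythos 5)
Your proposal is correct and follows essentially the same route as the paper: identify $\beta(\kappa)$ with the set of positive SE-steps of $\partial(\kappa)$ via first-column boxes having $ne(x)=0$ and hook length $se(x)$, then count residues modulo $a$ on both sides. You merely spell out (via the box-existence fact from the proof of Proposition~\ref{prop.core} applied with $k=0$) the surjectivity onto positive SE-steps that the paper states without elaboration.
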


\begin{proof}
We'll use the notation from Proposition~\ref{prop.core}. 
Let $\{x_1, \ldots, x_l\}$ be the set of boxes in the first column of the Young diagram of $\kappa$. 
Then $ne(x_i) = 0$ for any $i$ and the set $\{se(x_1), \ldots, se(x_l)\}$ covers all positive SE-steps of $\partial(\kappa)$.

The hook length of $x_i$ is thus equal to $se(x_i) - ne(x_i) = se(x_i)$, and the set of hook lengths of $\{x_1,\ldots, x_l\}$ is equal to $\{se(x_1), \ldots, se(x_l)\}$.

By definition, the set of hook lengths of $\{x_1,\ldots, x_l\}$ is $\beta(\kappa)$.
The number of elements in $\beta(\kappa)$ with residue $i$ modulo $a$ is equal to the number of positive SE-steps in $\partial(\kappa)$ with residue $i$ modulo $a$, so $c_i(\kappa) = d_i$ and $c(\kappa) = d$.
\end{proof}

\begin{corollary}
\label{path.area.abac}
Given $\kappa\in \mathrm{K}_{a,b}$, the abacus vector $(d_1,\ldots,d_{a-1}) = \mathbf{abac}(\kappa)$ is equal to the area vector $e(\pi)$ of the Dyck path $\pi = \mathbf{path} (\kappa)$.
\end{corollary}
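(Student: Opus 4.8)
The plan is to chain together the three identifications already established in this section, so that the corollary reduces to a sequence of substitutions. First I would recall that Anderson's bijection is \emph{defined} by the requirement $\alpha(\mathbf{path}(\kappa)) = \beta(\kappa)$; hence the set $\alpha(\pi)$ attached to the Dyck path $\pi = \mathbf{path}(\kappa)$ is literally the set $\beta(\kappa)$ of first-column hook lengths of the core $\kappa$.

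Next I would invoke the two counting descriptions of the respective area vectors that were recorded in the background section. On the Dyck-path side one has $e_i(\pi) = \bigl|\{\, x \in \alpha(\pi) : x \equiv i \pmod{a} \,\}\bigr|$, and on the core side $c_i(\kappa) = \bigl|\{\, x \in \beta(\kappa) : x \equiv i \pmod{a} \,\}\bigr|$. Since $\alpha(\pi) = \beta(\kappa)$ by the previous paragraph, counting the elements lying in each fixed residue class modulo $a$ yields $e_i(\pi) = c_i(\kappa)$ for every $i \in \{0, \dots, a-1\}$; in particular both are $0$ for $i = 0$, consistent with the conventions $e_0(\pi) = 0$ and $c_0(\kappa) = 0$. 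Therefore $e(\pi) = c(\kappa)$ as vectors.

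Finally I would apply the immediately preceding proposition, which asserts $\mathbf{abac}(\kappa) = c(\kappa)$ (with the understanding that the listed components are $(d_1, \dots, d_{a-1})$, the $0$-th coordinate being determined). Combining the two equalities gives $\mathbf{abac}(\kappa) = c(\kappa) = e(\pi)$, which is exactly the statement of the corollary.

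I do not expect any genuine obstacle here: every step is a direct substitution using results proved earlier in the excerpt. The only point requiring a little care is the bookkeeping of index ranges — making sure the ``area vector'' $e(\pi)$ of the Dyck path and the ``area vector'' $c(\kappa)$ of the core are being compared over the same index set, and that the omitted or forced $0$-th coordinate is handled consistently, before matching both against the abacus vector $\mathbf{abac}(\kappa)$.
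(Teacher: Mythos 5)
Your argument is correct and is exactly the chain of identifications the paper intends (the corollary is left without an explicit proof precisely because it follows by substituting $\alpha(\pi)=\beta(\kappa)$ into the residue-class definitions of $e(\pi)$ and $c(\kappa)$ and then invoking the preceding proposition $\mathbf{abac}(\kappa)=c(\kappa)$). Your care with the $0$-th coordinate matches the paper's convention as well.
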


From our definition $d_0$ is always equal to 0, and thus we often omit that coordinate. For an arbitrary $a$-core $\lambda$ the only condition on the other coordinates $(d_1,\ldots, d_{a-1})$ is that they are all non-negative. It is often useful to think about $(d_1,\ldots, d_{a-1})$ as coordinates of an $a$-core in $\mathbb{Z}^{a-1}$.

\begin{remark}
Embedding of the set of $(a,b)$-cores in $\mathbb{Z}^{a-1}$ has been studied from the point of view of Ehrhart theory in~\cite{Johnson.15}.
\end{remark}

To describe the area statistic in terms of vectors $d$, notice that the number of rows in $\lambda$ is equal to the number of positive SE-steps in $\partial(\lambda)$, which correspond to non-negative black beads in the $a$-abacus diagram. Thus, we define the area statistic of $d$ to be \defn{$area(d)$} $ = \sum d_i$.

\section{Simultaneous cores with distinct parts.}
\label{section.description}
Let us consider an $a$-core $\lambda$. We can express the condition that $\lambda$ has distinct parts in terms of the boundary $\partial(\lambda)$.

\begin{proposition}
An $a$-core $\lambda$ has distinct parts if and only if for each positive SE-step $j$ in $\partial(\lambda)$, the steps $j-1$ and $j+1$ are NE-steps.
\end{proposition}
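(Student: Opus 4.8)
The plan is to pass through the set $\beta(\lambda)$ of first-column hook lengths: first rewrite ``distinct parts'' as a gap condition on $\beta(\lambda)$, then rewrite that gap condition as the stated condition on $\partial(\lambda)$.

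First I would record the standard relation between the parts of $\lambda$ and $\beta(\lambda)$. If $\lambda = (\lambda_1 \ge \lambda_2 \ge \cdots \ge \lambda_l)$ has $l$ parts, the box in row $i$ of the first column has hook length $\lambda_i + l - i$, so listing $\beta(\lambda)$ in decreasing order as $\beta_1 > \beta_2 > \cdots > \beta_l$ we get $\beta_i = \lambda_i + l - i$, hence $\lambda_i - \lambda_{i+1} = \beta_i - \beta_{i+1} - 1$. Since $\lambda_l = \beta_l \ge 1$ automatically, this shows $\lambda$ has distinct parts if and only if $\beta_i - \beta_{i+1} \ge 2$ for all $i$, i.e. if and only if $\beta(\lambda)$ contains no two consecutive integers.

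Next I would invoke the description of $\beta(\lambda)$ in terms of the boundary that was established in the proof of the preceding proposition (and which holds for any partition): $\beta(\lambda)$ is exactly the set of positive SE-steps of $\partial(\lambda)$, while step $0$ is an NE-step and all negative steps are SE-steps by construction. Now unwind the gap condition. If $\beta(\lambda)$ has no consecutive pair and $j$ is a positive SE-step, then $j+1 \notin \beta(\lambda)$; since $j+1 > 0$ this forces $j+1$ to be an NE-step, and likewise $j-1 \notin \beta(\lambda)$ forces $j-1$ to be an NE-step when $j \ge 2$, while for $j = 1$ the step $j-1 = 0$ is an NE-step by the labeling convention. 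Conversely, if every positive SE-step $j$ has $j+1$ an NE-step, then no two consecutive positive integers can both be SE-steps, so $\beta(\lambda)$ contains no consecutive pair, and we are done by the previous paragraph.

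The argument is essentially bookkeeping of step labels; the only point needing care is the boundary case $j = 1$, where the neighbour $j-1 = 0$ is covered precisely by the convention that the first NE-step is numbered $0$. (Note that the hypothesis that $\lambda$ is an $a$-core is not actually used in this proposition.)
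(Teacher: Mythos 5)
Your proof is correct, but it routes through the $\beta$-set where the paper argues directly on the boundary. The paper's proof is a one-line geometric observation: two consecutive SE-steps $j$ and $j+1$ border two rows of $\lambda$ from the NE side, and those two rows then have equal length (the converse direction is left implicit but follows from the same correspondence between rows and positive SE-steps). You instead encode the parts via the first-column hook lengths $\beta_i = \lambda_i + l - i$, observe that distinct parts is equivalent to $\beta(\lambda)$ containing no two consecutive integers, and then identify $\beta(\lambda)$ with the set of positive SE-steps. That identification is proved in the paper only in the statement about $(a,b)$-cores and abacus vectors, but as you note the argument there ($ne(x)=0$ for first-column boxes, and their SE-steps exhaust the positive SE-steps) works for an arbitrary partition, so your appeal to it is legitimate. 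Your version is slightly longer but buys two things: it handles both directions explicitly, and it makes visible the gap-$\geq 2$ characterization of $\beta(\lambda)$ that is implicitly reused later (e.g.\ in the sparsity lemma of Section~\ref{sec.r=2}). Your treatment of the boundary case $j=1$ via the convention that step $0$ is an NE-step, and your remark that the $a$-core hypothesis is never used, are both accurate.
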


\begin{proof}
Suppose there are two consecutive SE-steps $j+1$ and $j$ in $\partial(\lambda)$. Then there are two rows $row_i$ and $row_{i+1}$ in the Young diagram of $\lambda$ that are bordered by steps $j+1$ and $j$ from NE-side. Then $\lambda_i = \mathrm{length}(row_i)$ and $\lambda_{i+1} = \mathrm{length}(row_{i+1})$ are equal, and thus we arrive to a contradiction.
\end{proof}

That property can be formulated in terms of vectors $d=\mathbf{abac}(\lambda)$. Given a subset $S$ of $\{0,1,\ldots, a-1\}$, we call $S$ to be an \defn{$a$-sparse set} if for any two elements $n,m \in S,\ |n-m| \neq 1$. The \defn{support} of the vector $d$ (denoted by \defn{$\mathrm{supp}(d)$}) is defined to be the set of all indexes $i$ with $d_i > 0$. We call vectors $d = (d_{0}, d_{1}, \ldots, d_{a-1})$ with sparse support to be \defn{$a$-sparse vectors}. Note that omitting $d_{0} = 0$ in $d$ doesn't change the sparsity of the vector. 

\begin{proposition}
\label{prop.distinct}
An $a$-core $\lambda$ has distinct parts if and only if the vector $d=\mathbf{abac}(\lambda)$ is an $a$-sparse vector.
\end{proposition}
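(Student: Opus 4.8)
The plan is to translate the boundary criterion of the preceding proposition into abacus language using the left-justified description of black beads from Corollary~\ref{corollary.acore}. The key correspondence to establish is that ``two SE-steps of $\partial(\lambda)$ at consecutive indices'' matches exactly ``black beads in the same position on two adjacent runners.''

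First I would restate the preceding proposition in the equivalent form: $\lambda$ has distinct parts if and only if there is no index $j \ge 1$ such that steps $j$ and $j+1$ of $\partial(\lambda)$ are both SE-steps. Indeed, if $j$ is a positive SE-step with $j+1$ an SE-step this is immediate; if instead $j-1$ is an SE-step then, since step $0$ is an NE-step, we must have $j-1 \ge 1$, and the pair $\{j-1,j\}$ works; conversely any such pair yields a positive SE-step with an adjacent SE-step. Next I would write a general index as $j = i + ak$ with $0 \le i \le a-1$ and $k \in \mathbb{Z}$. By the construction of the $a$-abacus together with Corollary~\ref{corollary.acore}, step $j$ is an SE-step precisely when position $k$ on runner $i$ carries a black bead, i.e. precisely when $k < d_i$. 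If $j \ge 1$ is an SE-step then necessarily $i \neq 0$, since $j = ak$ with $k < d_0 = 0$ is impossible. Comparing $j$ with $j+1$: when $i \le a-2$ we have $j+1 = (i+1) + ak$, so $j$ and $j+1$ are both SE-steps exactly when $k < d_i$ and $k < d_{i+1}$, and a nonnegative such $k$ exists exactly when $d_i > 0$ and $d_{i+1} > 0$, i.e. when $\{i,i+1\} \subseteq \mathrm{supp}(d)$; when $i = a-1$ we have $j+1 = a(k+1)$ with $k+1 \ge 1$, which is never an SE-step because $d_0 = 0$, so no adjacent SE-pair can ``wrap around'' from runner $a-1$ to runner $0$.

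Assembling these observations, $\lambda$ fails to have distinct parts if and only if $\mathrm{supp}(d)$ contains two consecutive integers $i, i+1$ with $1 \le i \le a-2$; since $0 \notin \mathrm{supp}(d)$ and $\mathrm{supp}(d) \subseteq \{0,\dots,a-1\}$, this is exactly the statement that $\mathrm{supp}(d)$ is not an $a$-sparse set, that is, that $d$ is not an $a$-sparse vector. The only delicate point, and the step I would flag as the main obstacle, is the bookkeeping of the corner cases: that step $0$ is an NE-step, and that $d_0 = 0$ both forbids the spurious wraparound pair and keeps the all-black negative positions out of the picture. Once these are checked the equivalence is a direct translation, and I do not expect any substantive difficulty beyond it.
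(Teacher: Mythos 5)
Your proof is correct and takes essentially the same route as the paper: both translate the boundary criterion into abacus language via the correspondence between SE-step $j=i+ak$ and a black bead at position $k$ on runner $i$, using left-justification ($k<d_i$) and the special role of runner $0$ to rule out wraparound. Your version is organized as a single biconditional via the negation rather than two separate implications, and is if anything slightly more explicit about the corner cases, but there is no substantive difference.
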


\begin{proof}
($\Rightarrow$) Consider an index $i \in \{1,\ldots, a-1\}$ such that $d_i > 0$. For the corresponding $a$-abacus diagam that means the bead on runner $i$ in position $0$ is a black one, and the $i$-th step of $\partial(\lambda)$ is an SE-step. If $\lambda$ has distinct parts, that means steps $i-1$ and $i+1$ of $\partial(\lambda)$ are NE-steps and thus runners $i-1$ and $i+1$ don't have black beads on nonnegative positions, and $d_{i-1} = d_{i+1} = 0$. 

($\Leftarrow$) Conversely, suppose $d=\mathbf{abac}(\lambda)$ is a sparse vector. Any step SE-step $j$ of $\partial(\lambda)$ corresponds to some black bead on runner $i$ in position $k$. Because of the sparsity, beads on runners $i-1$ and $i+1$ in position $k$ must be white ones, and thus steps $j-1$ and $j+1$ of $\partial(\lambda)$ must be NE-steps.
Note that for any SE-step $j$ corresponding to the runner $i=1$, step to the left of $j$ is on the $0$-th runner and thus is always an NE-step. Similarly, when $i=a-1$ the step to the right of $j$ is on the $0$-th runner in positive position and thus is always an NE-step.
\end{proof}

We now consider an additional structure of simultaneous $(a,b)$-cores $\kappa$ in terms of their $a$-abacus diagrams. We use the Proposition~\ref{prop.core} one more time, but now looking at $\kappa$ as a $b$-core.

\begin{proposition}
\label{prop.sim}
Let $\kappa$ be an $a$-core with $d = \mathbf{abac}(\kappa)$ and consider positive integer number $b=sa+r$ with $0\le r<a$. Then $\kappa$ is a simultaneous $(a,b)$- core if and only if for any index $i$ between 1 and $a-1$, one of the following is true:
\begin{enumerate}
\item $i \geq r$ and $d_{i} \leq d_{i-r} + s$,
\item $i<r$ and $d_{i} \leq d_{i+a-r}+s+1$.
\end{enumerate}
\end{proposition}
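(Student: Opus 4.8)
The plan is to apply Proposition~\ref{prop.core} to $\kappa$ regarded as a $b$-core and then translate the resulting condition on $\partial(\kappa)$ into the language of the $a$-abacus vector $d$. By Proposition~\ref{prop.core}, $\kappa$ is a $b$-core if and only if for every SE-step $j$ of $\partial(\kappa)$ the step $j-b$ is again an SE-step. Since $\kappa$ is already assumed to be an $a$-core, we may use the $a$-runner structure and the left-justification of Corollary~\ref{corollary.acore}, so the entire task reduces to reading off what ``$j$ SE $\Rightarrow j-b$ SE'' says about the coordinates $d_i$.

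First I would recall that a step $j = i + ak$ is an SE-step exactly when position $k$ on runner $i$ carries a black bead, and by Corollary~\ref{corollary.acore} this happens iff $k < d_i$ (with the convention $d_0 = 0$, and all negative positions black on every runner). Writing $b = sa + r$ with $0 \le r < a$, I would compute
$$j - b = (i + ak) - (sa + r) = (i-r) + a(k-s).$$
If $i \ge r$, this is position $k-s$ on runner $i-r \in \{0,\dots,a-1\}$; if $i < r$, I rewrite $i-r = (i+a-r) - a$, so that $j-b$ sits in position $k-s-1$ on runner $i+a-r \in \{1,\dots,a-1\}$. Hence ``$j-b$ is SE'' becomes ``$k-s < d_{i-r}$'' in the first case and ``$k-s-1 < d_{i+a-r}$'' in the second.

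The next step is to quantify over all SE-steps $j$. A black bead on runner $i$ occupies \emph{every} position $k < d_i$, so the inequality above must hold for all such $k$; as the right-hand side is independent of $k$, the binding instance is $k = d_i - 1$, the topmost black bead. This produces exactly $d_i \le d_{i-r} + s$ when $i \ge r$ and $d_i \le d_{i+a-r} + s + 1$ when $i < r$, i.e.\ conditions (1) and (2). If $d_i = 0$ there is no nonnegative black bead on runner $i$ and the inequality is vacuous, so it suffices to range over $i \in \{1,\dots,a-1\}$. One also checks that runner $0$ and negative positions impose nothing: there $k < 0$, hence $k-s < 0$ and $k-s-1 < 0$, and a negative position on any runner is black, so ``$j-b$ SE'' is automatic. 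Conversely, if (1) and (2) hold, then running this computation backwards — using $k \le d_i - 1$ — shows every SE-step maps under $-b$ to an SE-step, so Proposition~\ref{prop.core} yields that $\kappa$ is a $b$-core, and being already an $a$-core it is an $(a,b)$-core.

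I do not expect a genuine obstacle here; the only place demanding care is the index bookkeeping in the case $i<r$, where a downward shift by $b$ both changes the runner by $a-r$ and drops the position by an \emph{extra} unit (the source of the ``$s+1$''), together with confirming that the edge situations — runner $0$, the images that land on runner $0$ (namely $i=r$, where (1) reads $d_r \le s$), and the negative positions — never contribute further inequalities. The reduction ``for all $k < d_i$ it suffices to take $k = d_i-1$'' uses only the left-justification of Corollary~\ref{corollary.acore}, so it is routine.
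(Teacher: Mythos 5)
Your proposal is correct and follows essentially the same route as the paper: both apply Proposition~\ref{prop.core} to $\kappa$ viewed as a $b$-core, translate ``$j$ SE $\Rightarrow j-b$ SE'' into bead positions on the $a$-runner abacus via $j-b=(i-r)+a(k-s)$ (with the extra unit drop in position when $i<r$), and extract the inequalities from the topmost black bead $k=d_i-1$, with the converse following from left-justification. The bookkeeping for runner $0$, negative positions, and the two index cases matches the paper's treatment.
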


\begin{proof} ($\Rightarrow$)
Fix an index $i\in\{1,\ldots, a-1\}$ and consider all black beads on runner $i$ in nonnegative positions $k = 0,\ldots, d_i -1$. The corresponding SE-steps of $\partial(\kappa)$ enumerated by $j = i,\ i+a, \ldots ,\ i + a(d_i-1)$.

From Proposition~\ref{prop.core}, if $\kappa$ is a $b$- core, then for any positive SE-step $j$ in $\partial(\kappa)$ the step $j-b$ is also an SE-step.

If $i \ge r$, steps $j-b = (i-r) + a(k-s)$ with $k = 0,\ldots,d_i-1$ correspond to the black beads on the runner $(i-r)$ in positions $k = -s,\ -s+1,\ldots,\ d_i-s-1$, so the number of nonnegative black beads on the runner $(i-r)$ is greater or equal to $(d_i -s)$, and thus $d_{i} \leq d_{i-r} + s$.

Similarly, if $i<r$, steps $j-b = (i+a-r) + a(k-s-1)$ with $k = 0,\ldots,d_i-1$ correspond to the black beads on the runner $(i+a-r)$ in positions $k = -s-1,\ -s,\ldots,\ d_i-s-2$, so the number of nonnegative black beads on the runner $(i+a-r)$ is greater or equal to $(d_i - s-1)$, and thus $d_{i} \leq d_{i+a-r}+s+1$.

($\Leftarrow$) Conversely, consider any SE-step $j$ in $\partial(\lambda)$ with the corresponding black bead on runner $i$ and position $k < d_i$.

If $i \ge r$, the step $j-b$ corresponds to a bead on runner $(i-r)$ and position $(k-s)$. Since $k-s < d_i-s \le d_{i-r}$, that bead must be a black one and the step $(j-b)$ is an SE-step. 

If $i<r$, the step $j-b$ corresponds to a bead on runner $(i+a-r)$ and position $(k-s-1)$. Since $k-s-1 < d_i-s-1 \le d_{i+a-r}$, that bead must be a black one and the step $j-b$ is an SE-step.

\end{proof}

Together with Proposition~\ref{prop.distinct}, Proposition~\ref{prop.sim} gives a complete description of the simultaneous $(a,b)$-cores with distinct parts.

\section{Maximum size of $(a,a+1)$-cores with distinct parts.}
\label{sec.s=1}

Combining Proposition~\ref{prop.distinct} and Proposition~\ref{prop.sim} for the case $b=a+1$, we get the following result.

\begin{theorem}
\label{thm.s=1.abacus}
An $a$-core $\kappa$ is an $(a,a+1)$-core with distinct parts if and only if $d=\mathbf{abac}(\kappa)$ has entries $d_i \in \{0,1\}$ and the support set $\mathrm{supp}(d) = \{i: d_i = 1\}$ is an $a$-sparse set.
\end{theorem}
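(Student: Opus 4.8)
The plan is to specialize Proposition~\ref{prop.sim} to the case $b = a+1$ and feed the resulting criterion, together with Proposition~\ref{prop.distinct}, into a short two-directional argument. First I would observe that writing $a+1$ in the form $sa+r$ with $0 \le r < a$ forces $s = r = 1$, so every index $i \in \{1,\ldots,a-1\}$ lands in case (1) of Proposition~\ref{prop.sim}, and the simultaneity criterion collapses to the single family of inequalities $d_i \le d_{i-1} + 1$ for $i \in \{1,\ldots,a-1\}$ (keeping in mind the standing convention $d_0 = 0$). This is the only computation in the proof, and it is immediate.

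For the ``if'' direction I would take a vector $d$ with all entries in $\{0,1\}$ and with $a$-sparse support. Sparsity of $\mathrm{supp}(d)$ is precisely the hypothesis of Proposition~\ref{prop.distinct}, so $\kappa$ has distinct parts. Moreover, since $d_{i-1} \ge 0$ for every $i$, we always have $d_i \le 1 \le d_{i-1} + 1$, so the reduced simultaneity criterion above is satisfied and $\kappa$ is an $(a,a+1)$-core by Proposition~\ref{prop.sim}.

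For the ``only if'' direction I would start from an $(a,a+1)$-core $\kappa$ with distinct parts, apply Proposition~\ref{prop.distinct} to conclude that $\mathrm{supp}(d)$ is $a$-sparse, and then rule out entries exceeding $1$. If $d_i \ge 2$ for some $i \in \{1,\ldots,a-1\}$, then $i \in \mathrm{supp}(d)$, so either sparsity of the support (when $i \ge 2$) or the convention $d_0 = 0$ (when $i = 1$) gives $d_{i-1} = 0$, and then the reduced criterion $d_i \le d_{i-1} + 1 = 1$ contradicts $d_i \ge 2$. Hence every $d_i \in \{0,1\}$, and $\mathrm{supp}(d) = \{i : d_i = 1\}$ is $a$-sparse.

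I do not expect a genuine obstacle: once Propositions~\ref{prop.distinct} and~\ref{prop.sim} are specialized, the argument is essentially bookkeeping. The only point requiring a moment's care is the boundary index $i = 1$, whose left neighbour sits on runner $0$ and so must be handled through the convention $d_0 = 0$ rather than through sparsity of the support; this is even stronger than what sparsity would give, so it causes no difficulty.
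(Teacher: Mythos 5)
Your proposal is correct and follows essentially the same route as the paper: specialize Proposition~\ref{prop.sim} with $s=r=1$ to get the single criterion $d_i \le d_{i-1}+1$, and combine it with Proposition~\ref{prop.distinct}, using sparsity (or the convention $d_0=0$ at $i=1$) to reduce that inequality to $d_i\le 1$ on the support. The paper phrases the bookkeeping as a case split on $i\in\mathrm{supp}(d)$ rather than as two implications, but the content is identical.
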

\begin{proof}
From Proposition~\ref{prop.distinct}, the fact that $\kappa$ has distinct parts is equivalent to the sparsity of the set $\mathrm{supp}(d)$.
Now, taking $s=1$ and $r=1$ in Proposition~\ref{prop.sim}, $\kappa$ is an $a+1$-core if and only if $d_i \le d_{i-1} + 1$ for all $i = 1,\ldots a-1$ (condition $d_0 \le d_{a-1} +2$ is always satisfied).

If $i$ is not in $\mathrm{supp}(d)$, then $d_i = 0$ and the equation $d_i \le d_{i-1} + 1$ is true.
If $i$ is in $\mathrm{supp}(d)$, then $d_{i-1} = 0$ because of the sparsity of $\mathrm{supp}(d)$ and so $d_i \le d_{i-1} + 1$ is equivalent to $d_i=1$.
\end{proof}

\begin{theorem}
\label{thm.s=1.fib}
The number of $(a,a+1)$-cores with distinct parts is equal to the Fibonacci number $F_{a+1}$.
\end{theorem}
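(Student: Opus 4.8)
The plan is to invoke Theorem~\ref{thm.s=1.abacus} to turn the enumeration into a classical subset count. By that theorem, $(a,a+1)$-cores with distinct parts are in bijection with binary vectors $d=(d_1,\ldots,d_{a-1})\in\{0,1\}^{a-1}$ whose support contains no two consecutive indices; equivalently, with subsets $S=\mathrm{supp}(d)\subseteq\{1,\ldots,a-1\}$ having no two elements differing by $1$. So the whole problem reduces to counting such "sparse" subsets of an $(a-1)$-element chain.

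Next I would introduce $f(n)$, the number of subsets of $\{1,\ldots,n\}$ containing no two consecutive integers, so that the quantity to be computed is $f(a-1)$. To get a recursion, condition on whether $n$ lies in such a subset $S$: if $n\notin S$, then $S$ is an admissible subset of $\{1,\ldots,n-1\}$, contributing $f(n-1)$; if $n\in S$, then necessarily $n-1\notin S$, so $S\setminus\{n\}$ is an admissible subset of $\{1,\ldots,n-2\}$, contributing $f(n-2)$. Hence $f(n)=f(n-1)+f(n-2)$ for $n\ge 2$.

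Finally I would pin down the initial values $f(0)=1$ (the empty abacus vector) and $f(1)=2$ (the vectors $(0)$ and $(1)$), which agree with $F_2$ and $F_3$ under the convention $F_1=F_2=1$; the recursion then yields $f(n)=F_{n+2}$ for all $n\ge 0$, and in particular $f(a-1)=F_{a+1}$, as claimed.

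I do not anticipate a genuine obstacle: once Theorem~\ref{thm.s=1.abacus} is available, the statement is the textbook fact that binary strings with no two adjacent $1$'s are enumerated by Fibonacci numbers. The only things needing care are the degenerate cases $a=1,2$, where the abacus vector is empty or of length one, and fixing the Fibonacci indexing consistently so that the count comes out as $F_{a+1}$ rather than a shifted version.
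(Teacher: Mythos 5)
Your proposal is correct and follows essentially the same route as the paper: apply Theorem~\ref{thm.s=1.abacus} to reduce the count to sparse subsets of $\{1,\ldots,a-1\}$, then condition on whether the largest index lies in the subset to obtain the Fibonacci recursion with the appropriate initial values. The indexing and degenerate cases are handled correctly, so nothing further is needed.
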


\begin{proof}
By Theorem~\ref{thm.s=1.abacus}, all $(a,a+1)$-cores are in a bijection with $a$-sparse sets $S = \mathrm{supp}(\mathbf{abac}(\kappa)) \subset \{1, \ldots, a-1\}$. Let the number of $a$-sparse sets be $G_a$. Then, depending on whether an element $a-1$ is in a set, we can divide all $a$-sparse sets into two classes, so that $G_a = G_{a-1} + G_{a-2}$ and $G^{(1)}=1, \ G_2 =2$. Thus $G_a = F_{a+1}$.
\end{proof}

H.Xiong~\cite{Xiong.15} proved Theorem~\ref{thm.s=1.fib} together with conjectures about the largest size of $(a,a+1)$-cores with distinct parts and the number of such cores of maximal size (see Theorem~\ref{thm.s=1}). Here we provide another proof, which it is formulated in a different framework and which also will be useful for our future discussion.

\begin{theorem}
The largest size of an $(a,a+1)$-core with distinct parts is $\big\lfloor\frac{1}{3} \binom{a+1}{2}\big\rfloor$. Moreover, the core of maximal size is unique whenever $(a \mod 3)$ is $0$ or $2$ and there are two cores of maximal size when $(a \mod 3)$ is $1$.
\end{theorem}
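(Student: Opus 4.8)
The plan is to first convert $\mathrm{size}(\kappa)$ into a function of the support set supplied by Theorem~\ref{thm.s=1.abacus}, and then maximize that function over $a$-sparse sets. By Theorem~\ref{thm.s=1.abacus} an $(a,a+1)$-core $\kappa$ with distinct parts is encoded by the $a$-sparse set $S=\mathrm{supp}(\mathbf{abac}(\kappa))\subseteq\{1,\dots,a-1\}$, and since every $d_i$ lies in $\{0,1\}$, the nested set $\beta(\kappa)$ (which contains exactly $d_i=c_i(\kappa)$ elements of residue $i$ modulo $a$, hence just the element $i$ itself when $d_i=1$) equals $S$. Combining this with the standard identity $\mathrm{size}(\kappa)=\sum_{h\in\beta(\kappa)}h-\binom{|\beta(\kappa)|}{2}$ — which follows from writing the first-column hook lengths as $h_1>\dots>h_l$ and noting $\lambda_j=h_j-(l-j)$ — I obtain
\[
\mathrm{size}(\kappa)=\sum_{i\in S}i-\binom{|S|}{2}.
\]
So the task reduces to maximizing $\Phi(S):=\sum_{i\in S}i-\binom{|S|}{2}$ over $a$-sparse $S\subseteq\{1,\dots,a-1\}$ and counting the optimizers.

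Next I would fix the cardinality $m=|S|$. Writing $S=\{s_1<\dots<s_m\}$, sparsity gives $s_j\le s_{j+1}-2$ and hence $s_j\le a-1-2(m-j)$ for every $j$; summing these yields $\sum_{i\in S}i\le m(a-m)$, with equality if and only if $S=S_m:=\{a+1-2m,\,a+3-2m,\,\dots,\,a-1\}$, which is a legitimate $a$-sparse subset of $\{1,\dots,a-1\}$ precisely when $m\le\lfloor a/2\rfloor$. Therefore the largest size realizable with $|S|=m$ is
\[
g(m):=m(a-m)-\binom{m}{2}=\tfrac12\,m(2a-3m+1),
\]
attained by a \emph{unique} core, the one with support $S_m$. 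Since $g(m+1)-g(m)=a-1-3m$, the sequence $g(0),g(1),\dots$ strictly increases for $m<\tfrac{a-1}{3}$ and strictly decreases for $m>\tfrac{a-1}{3}$, so its peak occurs at the integer(s) nearest $\tfrac{a-1}{3}$; one checks that for these values $m\le\lfloor a/2\rfloor$, so the constraint never interferes (the one borderline exception being $a=1$, which is trivial).

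Finally I would run through the residue of $a$ modulo $3$. If $a=3t$ or $a=3t+2$, then $a-1-3m$ never equals $0$, so $g$ has a single maximizer, $m=t$ respectively $m=t+1$, giving a unique maximal core; evaluating gives $g(t)=\tfrac{t(3t+1)}{2}$ respectively $g(t+1)=\tfrac{(t+1)(3t+2)}{2}$. If $a=3t+1$, then $g(t)=g(t+1)$ and these are the only maximizers, so (for $t\ge1$, i.e.\ $a\ge4$) there are exactly two maximal cores, with supports $S_t$ and $S_{t+1}$, of common size $\tfrac{3t(t+1)}{2}$. A one-line parity check in each of the three cases shows the value equals $\big\lfloor\tfrac13\binom{a+1}{2}\big\rfloor$, completing the proof. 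I do not expect a genuine obstacle; the only care required is organizing the three residue cases together with their floor evaluations and confirming the bound $m\le\lfloor a/2\rfloor$ for the relevant $m$.
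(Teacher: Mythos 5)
Your proposal is correct and follows essentially the same route as the paper: encode the core by its sparse support set, reduce $\mathrm{size}(\kappa)$ to a function of that set, observe that for fixed cardinality $m$ the maximum is uniquely attained by the right-justified sparse set, and then maximize the resulting quadratic $\tfrac12 m(2a-3m+1)$ over $m$ with the mod-$3$ case analysis (your explicit caveat at $a=1$ is a point the paper glosses over). The only difference is cosmetic: you derive the size formula from the $\beta$-set identity $\mathrm{size}(\kappa)=\sum_{h\in\beta(\kappa)}h-\binom{|\beta(\kappa)|}{2}$ and bound $\sum_{i\in S}i\le m(a-m)$ directly, whereas the paper introduces gap variables $g_j$ and minimizes $\sum_j jg_j$; these are equivalent bookkeeping choices.
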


\begin{proof}
Given an $(a,a+1)$-core $\kappa$ with distinct parts, take $d = \mathbf{abac}(\kappa)$, $\mathrm{supp}(d) = \{i_1,\ldots, i_n\}$, where $n = n(d)= |\mathrm{supp}(d)| = area(\kappa)$ and indexes $0< i_1 <\ldots <i_n < a$. Denote the gaps between $i_{j}$ and $i_{j+1}$ as $g'_0 = i_1, \ g'_j = i_{j+1} - i_j -1$ for $j =1,\ldots, n-1$ and $g'_n = a-1-i_n$. 

Since $S$ is an $a$-sparse set, $g'_j \geq 1$ for $j=0, 1,\ldots,\ n-1$, and thus we can instead consider nonnegative integer sequence $g_j = g'_j - 1$ for $j = 0,\ldots,\ n-1$ and $g_n = g'_n$. Notice that $\sum_{j=0}^n g'_j = a-n$ and $\sum_{j=0}^n g_j = a-2n$.

\begin{lemma}
\begin{equation}
\label{equation.size}
\mathrm{size}(\kappa) = \frac{1}{6} 3n (2a+1-3n) - \sum_{j=0}^n{j g_j}.
\end{equation}
\end{lemma}
\begin{proof}
Following the construction of an abacus diagram (see Fig.~\ref{figure.abacus}), each row $row_{n-j+1}$ of the partition $\kappa$ is bordered by an SE-step $se_j\in\partial(\kappa)$, which in turn corresponds to a black bead on the runner $i_j$ of the abacus diagram.
 
The length of that row $\kappa_{n-j+1}$ is determined by the number of NE-steps in $\partial(\kappa)$ before the step $se_j$. In the abacus diagram, those NE-steps would correspond to the white beads on runners $i = 0,\ldots,\ i_j-1$ in position 0, and the number of those white beads is equal to $\sum_{k=1}^{j-1} g'_k$.

Summing over all $j$,
\begin{multline*}
	\mathrm{size}(\kappa) = \sum_{j=1}^n \kappa_{n-j+1} =  \sum_{j=1}^n \sum_{k=0}^{j-1} g'_k = \sum_{j=0}^n (n-j) g'_j = n\sum_{j=0}^n g'_j - \sum_{j=0}^n j g'_j =\\ = n(a-n) - \frac{n(n-1)}{2} - \sum_{j=0}^n j g_j = \frac{1}{6} 3n (2a+1-3n) - \sum_{j=0}^n{j g_j}.
\end{multline*}
\end{proof}

Thus, to find a core of largest size, we maximize over $n$ and all nonnegative integer sequences $\{g_j\}_{j=0}^n$ with $\sum g_j = a-2n$.
\begin{equation*}
\max_{\kappa} \mathrm{size}(\kappa) = \max_n \max_{g_j \geq 0} \Big( \frac{1}{6} 3n (2a+1-3n) - \sum_{j=0}^n j g_j\Big) = \max_n \Big(\frac{1}{6} 3n (2a+1-3n) - \min_{g_j \geq 0} \sum_{j=0}^n j g_j\Big).
\end{equation*}

The minimum of $\sum j g_j$ over nonnegative sequences $\{g_j\}_{j=0}^n$ with $\sum g_j = a-2n$ is equal to 0 and is uniquely achieved when $g_0 = a-2n$ and $g_j = 0$ for $j \neq 0$. Thus,

\begin{equation*}
\max_{\kappa} \mathrm{size}(\kappa) = \max_n \Big(\frac{1}{6} 3n (2a+1-3n)\Big).
\end{equation*}

The maximum of the parabola on the right-hand side is achieved at a closest integer point to a number $\frac{2a+1}{6}$.

\begin{enumerate}
\item When $(a\mod 3) = 0$, the maximum is achieved at the unique point $n = \frac{a}{3}$ and the value of the maximum is $\frac{1}{3} \binom{a+1}{2}$.
\item When $(a \mod 3) = 2$, the maximum is achieved at the unique point $n = \frac{a+1}{3}$ and the value of the maximum is $\frac{1}{3} \binom{a+1}{2}$.
\item When $(a \mod 3) = 1$, there are two integer points equally close to a number $\frac{2a+1}{6}$, which are $n_1 = \frac{a-1}{3}$ and $n_2 = \frac{a+2}{3}$. Both integers give the maximum value equal to $\frac{1}{3} \frac{(a-1)(a+2)}{2} = \big\lfloor \frac{1}{3} \binom{a+1}{2} \big\rfloor$.
\end{enumerate}
\end{proof}

In Section~\ref{sec.r=1} we give a generalization of the Proposition~\ref{thm.s=1.abacus} and provide another proof of part (4) of Theorem~\ref{thm.s=1}. Before we do that, however, we need to develop a notion of graded Fibonacci numbers in Section~\ref{sec.r=1}.

\section{Number of $(2k-1,2k+1)$-cores with distinct parts.}
\label{sec.r=2}

The number of $(2k-1,2k+1)$-cores with distinct parts was conjectured by A.Straub~\cite{Straub.16} to be $2^{2k-2}$. That conjecture have been proved by  Yan, Qin, Jin and Zhou in \cite{YQJZ.16} using surprisingly deep arguments. Here, we present another perspective on $(2k-1,2k+1)$-cores using their connection with Dyck paths.

\begin{theorem}
The number of $(2k-1,2k+1)$-cores with distinct parts is equal to $2^{2k-2}$.
\end{theorem}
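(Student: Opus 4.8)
The plan is to translate the problem, via Anderson's bijection $\mathbf{path}\colon \mathrm{K}_{2k-1,2k+1}\to \mathrm{D}_{2k-1,2k+1}$, into a statement about lattice paths and then count those paths directly. Recall from Corollary~\ref{path.area.abac} that the abacus vector $(d_1,\ldots,d_{a-1})=\mathbf{abac}(\kappa)$ coincides with the area vector $e(\pi)$ of $\pi=\mathbf{path}(\kappa)$. So the $(a,b)$-cores with distinct parts are exactly the Dyck paths whose area vector is $a$-sparse, i.e. has no two adjacent nonzero entries. Setting $a=2k-1$, $b=2k+1$, so $s=1$ and $r=2$ in Proposition~\ref{prop.sim}, the additional simultaneous-core condition becomes: for $i\ge 2$, $d_i\le d_{i-2}+1$, and for $i=1$, $d_1\le d_{a-2}+2$ (the latter being automatic). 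The first step, then, is to assemble these three constraints — nonnegativity, sparsity of support, and $d_i\le d_{i-2}+1$ — into a clean combinatorial description of the admissible vectors $(d_1,\ldots,d_{2k-2})$.

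The key observation I would exploit is that the congruence condition $d_i\le d_{i-2}+1$ decouples the even-indexed and odd-indexed coordinates into two independent chains: $d_2,d_4,\ldots,d_{2k-2}$ (a chain of length $k-1$) and $d_1,d_3,\ldots,d_{2k-3}$ (also a chain of length $k-1$), each of which must be "slowly increasing" in the sense that each term exceeds its predecessor by at most $1$, with the first term of each chain unconstrained from below except by nonnegativity. Meanwhile the sparsity condition couples the two chains: it forbids $d_i$ and $d_{i+1}$ from both being positive. I would first handle the structure of a single slowly-increasing nonnegative sequence $0\le d_{j}$, $d_{j+2}\le d_j+1$: such a sequence is determined by its starting value and the set of indices at which it strictly increases, but combined with the sparsity interleaving the counting should collapse. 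Concretely, I expect to show that along each of the two chains the sequence of positive entries, once it becomes positive, can only increase by steps of $0$ or $1$, and I would set up a bijection between admissible pairs of chains and binary strings of length $2k-2$, giving $2^{2k-2}$.

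The cleanest route is probably to reduce to the area vector having all entries in $\{0,1\}$. I would argue that in fact, for $(2k-1,2k+1)$-cores with distinct parts, every $d_i\in\{0,1\}$: if some $d_i\ge 2$, then walking down the chain $d_i,d_{i-2},d_{i-4},\ldots$ via $d_{i-2}\ge d_i-1\ge 1$ forces a long run of positive entries in one parity class, which — combined with sparsity forcing the other parity class to be zero at the interleaved spots near the top of the chain — eventually reaches index $1$ or $2$ and then wraps via the $d_1\le d_{a-2}+2$ / boundary behavior to contradict sparsity at the $0$-th runner. Once entries are forced into $\{0,1\}$, the condition $d_i\le d_{i-2}+1$ is automatic, and we are left counting $\{0,1\}$-vectors of length $2k-2$ whose support is sparse and whose support additionally... — and here I would check whether any residual constraint survives; if the count of sparse subsets of $\{1,\ldots,2k-2\}$ (which is $F_{2k}$, not $2^{2k-2}$) is too large, then a further condition from Proposition~\ref{prop.sim} must be cutting it down, most plausibly forcing the support to be a union of "blocks" of a controlled shape. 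I would then recount accordingly.

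The main obstacle I anticipate is exactly pinning down that residual constraint and the boundary/wrap-around behavior at indices $0,1$ and $a-1,a-2$: the conditions in Proposition~\ref{prop.sim} are stated per-index with a case split at $i=r=2$, and the index-$1$ case $d_1\le d_{a-2}+2$ links the two ends of the odd chain in a way that is easy to mishandle. Getting the bookkeeping right so that the admissible vectors are counted by a power of $2$ rather than a Fibonacci number — i.e. identifying the precise extra freedom ($2$ choices per coordinate pair, or per step) that replaces the "no two adjacent" Fibonacci recursion with a free binary choice — is the crux. If the direct counting proves stubborn, the fallback is to set up the recursion on $k$ directly: peel off the top two runners $d_{2k-3},d_{2k-2}$, analyze the finitely many local configurations allowed by sparsity and the chain conditions, and show the resulting recurrence is $N_k=4N_{k-1}$ with $N_1=1$.
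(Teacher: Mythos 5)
Your setup is right---translating to abacus vectors and specializing Proposition~\ref{prop.sim} to $s=1$, $r=2$ to get the sparsity condition together with $d_i\le d_{i-2}+1$ and the wrap-around $d_1\le d_{2k-2}+2$ (note the index is $a-1=2k-2$, not $a-2$)---but the route you declare ``cleanest,'' namely forcing all $d_i\in\{0,1\}$, is false, and it fails already at $k=2$. For $a=3$, $b=5$ the admissible vectors are $(0,0)$, $(0,1)$, $(1,0)$ and $(2,0)$: the vector with $d_1=2$ is a legitimate $(3,5)$-core with distinct parts, and without it you would count only $3=F_4$ rather than $4=2^2$. Your hedge points in the wrong direction as well: the number of sparse $\{0,1\}$-vectors is $F_{2k}$, which is \emph{smaller} than $2^{2k-2}$ for $k\ge 2$, so there is no ``residual constraint cutting it down''---the missing mass comes precisely from entries larger than $1$, which arise because the even chain starts at $d_2\le 1$ but the odd chain starts at $d_1\le d_{2k-2}+2$ and each chain may then climb by $1$ at every step where sparsity permits. (For $k=3$ the support $\{1,3\}$ alone contributes $5$ vectors, with $d_3$ as large as $3$.) The chains are also not independent as you suggest: the wrap-around condition makes the allowed range of $d_1$ depend on the last entry of the even chain, and for the same reason your fallback recursion $N_k=4N_{k-1}$ does not follow from ``peeling off the top two runners,'' since deleting $d_{2k-3},d_{2k-2}$ changes the wrap-around constraint on $d_1$ and the truncated vector need not be admissible for the smaller problem. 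What remains after fixing all this is a nontrivial weighted sum over sparse supports of products of chain lengths, and you have not indicated how that sum collapses to $2^{2k-2}$.

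For comparison, the paper sidesteps the abacus description entirely for this theorem: it works with the rank sets $\alpha(\pi)$ of $(2k-1,2k+1)$-Dyck paths, shows that distinctness of parts is equivalent to sparsity of $\alpha(\pi)\cap[2k-1]$ alone, then cuts the triangle above the diagonal into three regions, reflects two of them to assemble a rectangle, and counts the resulting ``$C$-symmetric'' lattice paths by a binomial identity that telescopes to $\tfrac12\cdot 2^{2k-1}$. If you want to salvage the abacus approach you would need to carry out the weighted enumeration over supports honestly, which is considerably harder than the two-line reduction you envisioned.
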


\begin{proof}
We will use the Dyck path interpretation of cores. For a Dyck path $\pi$, we denoted $\alpha(\pi)$ to be the set of ranks of the area boxes of $\pi$ (see Fig.~\ref{figure.anderson}). We'll also make use of a standard notation $[n] = \{1,\ 2,\ldots,\ n\}$

\begin{lemma}
Under the bijection $\mathbf{path}$, the set of $(2k-1,2k+1)$-cores with distinct parts maps to the set of $(2k-1,2k+1)$-Dyck paths $\pi$ such that $\alpha(\pi) \cap [2k-1]$ is a sparse set. 
\end{lemma}

\begin{proof}[Proof of the lemma]
Given an $(a,b)$- core $\kappa$ with distinct parts and $\pi = \mathbf{path}(\kappa)$, the sparse set $\beta(\kappa)$ is equal to $\alpha(\pi)$. Therefore, we need to prove that the sparsity of $\alpha(\pi)$ is implied by the sparsity of $\alpha(\pi)\cap [2k-1]$. 

Suppose for the sake of contradiction that $\alpha(\pi)\cap [2k-1]$ is sparse and there are two elements $j$ and $j+1$ in $\alpha(\pi)$. Since $\alpha(\pi)$ is a $2k-1$-nested set, elements $(j~\text{mod}~2k-1)$, $(j+1~\text{mod}~2k-1)$ are in $\alpha(\pi)$, they are both in $[2k-1]$, and they differ by one (since there are no multiples of $2k-1$ in $\alpha(\pi)$). Therefore, we get a contradiction.
\end{proof}

Denote $T_k$ to be the upper triangle of $R_{2k-1,2k+1}$, i.e. $T_k$ consists of all boxes with positive rank in $R_{2k-1,2k+1}$ (see Fig.~\ref{figure.anderson}). Separate $T_k$ into three parts by a vertical line $x=k-1$ and a horizontal line $y=k+2$. Below the line $y=k+2$ the boxes of $T_k$ form a staircase-like shape \defn{$A$} that contains, among other boxes, the boxes of odd contents from $[2k-1]$. To the right of the line $x=k-1$ the boxes of $T_k$ form a staircase shape \defn{$B$} that contains the boxes of even contents from $[2k-1]$. Above $y=k+2$ and to the left of $x=k-1$ the boxes of $T_k$ form a square shape \defn{$C$} of size $k-1$.

Now we reflect the shape $A \cup C$ over the main diagonal $y=x$ and denote the resulting shape as \defn{$A^T \cup C^T$}. Put that shape to the right of $C \cup B$ to form a rectangular region \defn{$P_k$} $ = C \cup B \cup A^T \cup C^T$ (see Fig.~\ref{figure.2k-1_2k+1}).

\begin{figure}[t]
\includegraphics[scale=0.18]{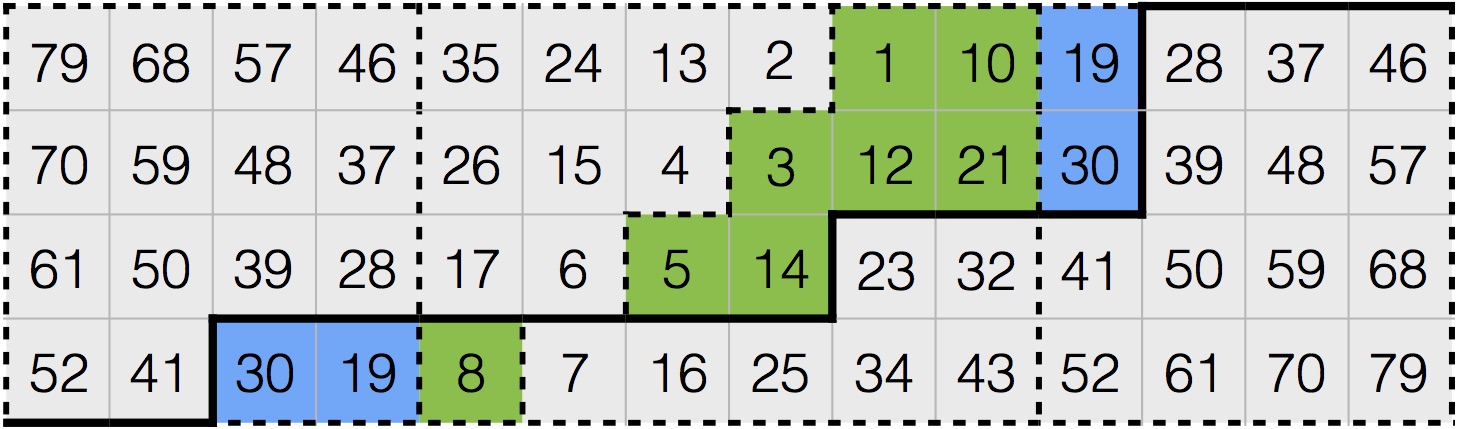} \centering
\caption{The rectangle $P_{5}$ with a $C$-symmetric path $\zeta$. Here, $B(\zeta)=\{8\}$, $A^{T}(\zeta) = \{1,3,5,10,12,14,21\}$ and $C(\zeta) = C^{T}(\zeta) = \{19,30\}$. Also, $i(\zeta) = 1$ and $j(\zeta) = 0$.} \centering
\label{figure.2k-1_2k+1}
\end{figure}

We call a boundary between $B$ and $A^T$ to be the main diagonal of $P_k$. Consider the paths $\zeta$ from the SW corner of $P_k$ to the NE corner of $P_k$ consisting of N and E steps. Denote \defn{$C(\zeta)$} to be the set of contents of boxes below $\zeta$ in $C$, denote \defn{$B(\zeta)$} to be the set of contents of boxes below $\zeta$ in $B$, denote \defn{$A^T(\zeta)$} to be the set of contents of boxes \textit{above} $\zeta$ in $A^T$ and denote \defn{$C^T(\zeta)$} to be the set of contents of boxes \textit{above} $\zeta$ in $C^T$.

We call $\zeta$ to be $C$-symmetric when $C(\zeta) = C^T(\zeta)$ (see Fig.~\ref{figure.2k-1_2k+1} and compare with Fig.~\ref{figure.anderson}).

\begin{lemma}
The set of $C$-symmetric paths $\zeta$ in $P_k$ is in bijection \defn{$\phi$} with the set of  $(2k-1,2k+1)$-Dyck paths with sparse $\alpha(\pi) \cap [2k-1]$. Moreover, $\alpha(\phi(\zeta)) = C(\zeta) \cup B(\zeta) \cup A^T(\zeta)$.
\end{lemma}

\begin{proof}[Proof of the lemma.]
We can define $\phi$ by the property above: $\phi(\zeta) = \pi$ if and only if $\alpha(\pi) = C(\zeta) \cup B(\zeta) \cup A^T(\zeta)$.
First, we need to make sure the map $\phi$ is well-defined, i.e. the set $\gamma(\zeta) := C(\zeta) \cup B(\zeta) \cup A^T(\zeta)$ is a $(2k-1,2k+1)$-nested set (i.e. check conditions (\ref{equation.nested})). 

Let $i \in \gamma(\zeta)$. If $i\in B(\zeta)$ or $i\in A^T(\zeta)$, conditions (\ref{equation.nested}) are satisfied since $B(\zeta)$ and $A^T(\zeta)$ are nested sets by construction. If $i\in C(\zeta) = C^T (\zeta)$, then $i-(2k+1) \in C(\zeta)\cup B(\zeta)$, since $C(\zeta)\cup B(\zeta)$ is $(2k+1)$-nested and $i-(2k-1) \in A^T(\zeta)\cup C^T(\zeta)$ since $A^T(\zeta)\cup C^T(\zeta)$ is $(2k-1)$-nested.

Second, we need to check that $\alpha(\phi(\zeta))\cap [2k-1] = \gamma(\zeta)\cap [2k-1]$ is sparse. Consider $i \in (B(\zeta)\cup A(\zeta) )\cap [2k-1]$, and assume without loss of generality that $i\in B(\zeta)$. Then $i$ is even and it is bordering odd boxes $i-1$ and $i+1$ from E and S directions. Since $\zeta$ goes above the box $i$, it can't go below boxes $i-1$ and $i+1$ in $A^T$, and thus $i-1,i+1 \not\in A^T(\zeta)$.
\end{proof}

Now we want to count the number of $C$-symmetric paths $\zeta$ in $P_k$. If $C(\zeta)$ is non-empty, call $C$-shape of $\zeta$ to be the shape of the diagram under $\zeta$ in $C$, and $C^T$-shape of $\zeta$ is defined correspondingly. Denote \defn{$i(\zeta)$} to be the the width of the $C$-shape of $\zeta$ minus 1 (or the height of $C^T$-shape minus 1). Denote \defn{$j(\zeta)$} to be the height of the $C$-shape of $\zeta$ minus 1 (or the width of $C^T$-shape minus 1). 

The number of $C$-symmetric paths with fixed $i(\zeta) = i$ and fixed $j(\zeta) = j$ is the number of possible paths in $C$ times the number of possible paths in $B \cup A^T$, which is equal to $\binom{i+j}{i} \binom{k+1 + (k-3-i-j)}{k+1}$. If $C(\zeta)$ is empty, the number of paths is equal to $\binom{k+1 +(k-1)}{k+1}$. Thus, the total number of paths is
\begin{multline*}
\binom{2k}{k+1} + \sum_{i,j \geq 0} \binom{i+j}{i} \binom{2k-2-(i+j)}{k+1} = \binom{2k}{k+1} + \sum_{i\geq 0}\sum_{j' \geq 0} \binom{j'}{i} \binom{2k-2-j'}{k+1} = \\ 
= \binom{2k}{k+1} + \sum_{i\geq 0} \binom{2k-1}{k+2+i} = \binom{2k-1}{k} + \binom{2k-1}{k+1} + \sum_{i\geq 0} \binom{2k-1}{k+2+i} =\\ 
= \sum_{k \leq i' \leq 2k-1} \binom{2k-1}{i'} = \frac{1}{2} \sum_{0 \leq i' \leq 2k-1} \binom{2k-1}{i'} = 2^{2k-2}.
\end{multline*}
\end{proof}

\section{Graded Fibonacci numbers and $(a,as+1)$-cores with distinct parts.}
\label{sec.r=1}

Unfortunately, for general $b$ there is no easy way to combine Proposition~\ref{prop.distinct} and Proposition~\ref{prop.sim}. However it can be achieved for specific values of $b$.

\begin{theorem}
\label{thm.description}
Let $\kappa$ be an $a$-core and $b=as+1$ for some integer $s$. Then $\kappa$ is an $(a,b)$-core with distinct parts if and only if the abacus vector $d = \mathbf{abac}(\kappa)$ is sparse and $d_i \leq s$ for $i=1,\ldots, a-1$.
\end{theorem}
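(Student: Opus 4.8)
The plan is to read this off as the natural generalization of Theorem~\ref{thm.s=1.abacus} (the case $s=1$), by combining Proposition~\ref{prop.distinct} with Proposition~\ref{prop.sim}. First I would invoke Proposition~\ref{prop.distinct} to replace the hypothesis ``$\kappa$ has distinct parts'' by the equivalent statement that $d=\mathbf{abac}(\kappa)$ is an $a$-sparse vector. It then remains to show that, for an $a$-sparse vector $d$, the core $\kappa$ is a simultaneous $(a,b)$-core with $b=as+1$ if and only if $d_i\le s$ for all $i=1,\ldots,a-1$.

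Next I would specialize Proposition~\ref{prop.sim} to $b=sa+r$ with $r=1$. Every index $i\in\{1,\ldots,a-1\}$ satisfies $i\ge r=1$, so only case (1) of that proposition is ever in force, and the $(a,b)$-core condition collapses to the single family of inequalities $d_i\le d_{i-1}+s$ for $i=1,\ldots,a-1$ (any boundary instance involving $d_0$ is automatically satisfied, since $d_0=0$). Then I would feed in the sparsity of $d$ through a case split on each index $i$: if $d_i=0$, the inequality $d_i\le d_{i-1}+s$ holds trivially; if $d_i>0$, then $a$-sparsity forces $d_{i-1}=0$, so the inequality reduces to exactly $d_i\le s$. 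Running this equivalence in both directions shows that, for a sparse $d$, the conditions of Proposition~\ref{prop.sim} are equivalent to the bound $d_i\le s$ holding for every $i=1,\ldots,a-1$ (the indices with $d_i=0$ satisfying it vacuously), which is the assertion of the theorem.

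I do not expect a genuine obstacle here; once the two structural propositions are in hand the argument is pure bookkeeping. The only points that need a little care are checking that the $i<r$ branch of Proposition~\ref{prop.sim} is vacuous when $r=1$ (and that the corresponding condition on $d_0$ is trivial), and phrasing the final equivalence ``sparse $+$ $(a,b)$-core $\iff$ sparse $+$ bounded by $s$'' as a statement about the whole vector $d$ rather than only about its support, so that the reverse implication is properly justified.
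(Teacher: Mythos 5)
Your proposal is correct and matches the paper's proof essentially line for line: both invoke Proposition~\ref{prop.distinct} for sparsity, specialize Proposition~\ref{prop.sim} to $r=1$ to get the single family of inequalities $d_i \le d_{i-1}+s$ (with the boundary condition involving $d_0$ trivially satisfied), and then use the case split on whether $i$ lies in $\mathrm{supp}(d)$ to reduce these to $d_i \le s$. No gaps; nothing further needed.
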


\begin{proof}
Similar to the proof of Theorem~\ref{thm.s=1.abacus}, we use Proposition~\ref{prop.distinct} to get the sparsity of $\mathrm{supp}(d)$. Moreover, using Proposition~\ref{prop.sim} with $r=1$ we see that $\kappa$ is an $(a,b)$-core if and only if $d_i \le d_{i-1} + s$ for all $i = 1,\ldots a-1$ (condition $d_0 \le d_{a-1} +s+1$ is automatically satisfied since $d_0 = 0$ for any abacus vector $d$).

If $i$ is not in $\mathrm{supp}(d)$, then $d_i = 0$ and the equation $d_i \le d_{i-1} + s$ is true.
If $i$ is in $\mathrm{supp}(d)$, then $d_{i-1} = 0$ because of the sparsity of $\mathrm{supp}(d)$ and so $d_i \le d_{i-1} + s$ is equivalent to $1 \le d_i \le s$.
\end{proof}

We use similar argument for the case $b = as-1$.

\begin{theorem}
\label{thm.description2}
Let $\kappa$ be an $a$-core and $b=as-1$ for some integer $s$. Then $\kappa$ is an $(a,b)$-core with distinct parts if and only if the abacus vector $d = \mathbf{abac}(\kappa)$ is sparse, $d_i \leq s$ for $i\neq a-1$ and $d_{a-1} \le s-1$.
\end{theorem}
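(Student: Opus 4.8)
The plan is to run exactly the argument used for Theorem~\ref{thm.description}, but to feed Proposition~\ref{prop.sim} the right presentation of $b$. Since $b = as - 1 = a(s-1) + (a-1)$, the parameters of Proposition~\ref{prop.sim} are $s' = s-1$ and $r = a-1$. With $r = a-1$ the only index with $i \ge r$ is $i = a-1$, so case~(1) of Proposition~\ref{prop.sim} reads $d_{a-1} \le d_{a-1-r} + s' = d_0 + (s-1) = s-1$ (using $d_0 = 0$). For the remaining indices $i = 1, \ldots, a-2$ we are in case~(2), which reads $d_i \le d_{i+a-r} + s' + 1 = d_{i+1} + s$. Hence $\kappa$ is a simultaneous $(a, as-1)$-core if and only if $d_{a-1} \le s-1$ and $d_i \le d_{i+1} + s$ for all $1 \le i \le a-2$.

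Next I would impose distinct parts, which by Proposition~\ref{prop.distinct} is equivalent to $\mathrm{supp}(d)$ being sparse, and combine the two conditions just as in Theorem~\ref{thm.description}. For $1 \le i \le a-2$: if $i \notin \mathrm{supp}(d)$ then $d_i = 0$ and $d_i \le d_{i+1} + s$ holds automatically; if $i \in \mathrm{supp}(d)$ then sparsity forces $d_{i+1} = 0$, so $d_i \le d_{i+1} + s$ is equivalent to $1 \le d_i \le s$. Together with $d_i \ge 0$ this says precisely $d_i \le s$ for $i \ne a-1$. For the last coordinate, the inequality $d_{a-1} \le s-1$ already holds with no appeal to sparsity, whether or not $a-1 \in \mathrm{supp}(d)$. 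Assembling these gives exactly the stated description: $d$ sparse, $d_i \le s$ for $i \ne a-1$, and $d_{a-1} \le s-1$; the reverse implication follows by reading the same equivalences backwards.

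The only point needing a little attention, and the place where this theorem differs from Theorem~\ref{thm.description}, is the asymmetric treatment of the last runner: because here the ``wrap-around'' in Proposition~\ref{prop.sim} compares runner $i$ with runner $i+1$ (rather than $i-1$, as in the $b = as+1$ case), runner $a-1$ ends up being compared with runner $0$, whose bead count is identically $0$; this is what produces the sharper bound $d_{a-1} \le s-1$ in place of $d_{a-1} \le s$. I do not anticipate any real obstacle: as with Theorem~\ref{thm.description}, the statement is a direct specialization of Propositions~\ref{prop.distinct} and~\ref{prop.sim}, and the whole proof is a short bookkeeping argument once $b$ is written as $a(s-1) + (a-1)$.
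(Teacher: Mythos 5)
Your proposal is correct and follows essentially the same route as the paper: both apply Proposition~\ref{prop.sim} with the decomposition $b = a(s-1) + (a-1)$ (i.e.\ $r = a-1$) to get $d_i \le d_{i+1} + s$ for $1 \le i \le a-2$ and $d_{a-1} \le d_0 + (s-1) = s-1$, then combine with the sparsity condition from Proposition~\ref{prop.distinct} exactly as in Theorem~\ref{thm.description}. Your observation that the bound on $d_{a-1}$ needs no appeal to sparsity is a correct (and slightly cleaner) reading of the wrap-around case.
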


\begin{proof}
Again, we use Proposition~\ref{prop.distinct} to get the sparsity of $\mathrm{supp}(d)$, and use Proposition~\ref{prop.sim} with $r = a-1$ to get inequalities $d_i \le d_{i+1} + s$ for $i = 0,\ldots,\ a-2$ and $d_{a-1} \le d_0 + (s-1)$.

If $i$ is not in $\mathrm{supp}(d)$, then $d_i = 0$ and the equation $d_i \le d_{i+1} + s$ is true.
If $i$ is in $\mathrm{supp}(d)$ and $i \neq a-1$, then $d_{i+1} = 0$ because of the sparsity of $\mathrm{supp}(d)$ and so $d_i \le d_{i-1} + s$ is equivalent to $1 \le d_i \le s$.
If $i = a-1$ and $i$ is in $\mathrm{supp}(d)$, note that $d_0$ is always 0, and thus $d_{a-1} \le d_0 + (s-1)$ equivalent to $1 \le d_{a-1} \le s-1$.
\end{proof}

For the further analysis we'll need a generating function of the area statistic of cores $\kappa$.
We will call that function to be a graded Fibonacci number.

\begin{definition}
For two integers $a$ and $b$, the graded Fibonacci number is
\begin{equation}
\label{eq.fib.cores}
	F_{a,b}(q) = \sum_{\kappa} q^{area(\kappa)},
\end{equation}
where the sum is taken over all $(a,b)$-cores $\kappa$ with distinct parts.
\end{definition}

\begin{remark}
\label{remark.Catalan}
If the sum above was taken over all $(a,b)$-cores, we would have obtained a graded Catalan number (see~\cite{Loehr.03}).
\end{remark}

\begin{remark}
We don't require $a$ and $b$ to be coprime. Despite of the fact that the sum would be infinite, the power series would converge for $|q| <1$. For the further analysis of Catalan numbers with $a$, $b$ not coprime, see~\cite{GMV.17}.
\end{remark}

\begin{remark}
\label{remark.s_to_infty}
When we set $s\to\infty$, the set of $(a,as+1)$-cores coveres the set of all $a$-cores with distinct parts. Thus we will also be interested in the limit of $F^{(s)}_a$ when $s\to\infty$.
\end{remark}

In the light of Theorem~\ref{thm.description} from here and until the end of the paper we will only consider the case $b=as+1$ (although all results that follow are applicable in the case $b = as-1$ with minor modifications). To shorten the notation, we define \defn{$F^{(s)}_a$} $ = F_{a, as+1}$.
It is helpful to rewrite the sum ~\eqref{eq.fib.cores} in terms of vectors $d=\mathbf{abac}(\kappa)$.
Denote the set of all $a$-sparse vectors $d = (d_0,\ d_1,\ldots,\ d_{a-1})$ with $d_0 = 0$ and $d_i \le s$ as \defn{$\mathcal{A}^{(s)}_{a}$}.

\begin{theorem}
\begin{equation}
\label{equation.fib.d}
F^{(s)}_a(q) = \sum_{d\in\mathcal{A}^{(s)}_{a}} q^{\sum d_i}.
\end{equation}
\end{theorem}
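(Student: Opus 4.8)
The plan is to show that the maps $\mathbf{abac}$ and $area$ convert the sum over cores on the left-hand side of \eqref{equation.fib.d} into the sum over abacus vectors on the right-hand side, so the whole statement is essentially a change of variables. First I would recall that $\mathbf{abac}$ is a bijection from $a$-cores to the set of nonnegative integer vectors $d = (d_0, d_1, \ldots, d_{a-1})$ with $d_0 = 0$ (this is Corollary~\ref{corollary.acore} together with the observation that $d_0$ is always $0$). Restricting this bijection to the subset of $(a,as+1)$-cores with distinct parts, Theorem~\ref{thm.description} tells us precisely that the image is the set of vectors $d$ that are sparse with $d_i \le s$ for $i = 1, \ldots, a-1$ — and this is exactly the set $\mathcal{A}^{(s)}_a$ by definition. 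So $\mathbf{abac}$ restricts to a bijection between the index set of the left sum and the index set of the right sum.

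Next I would match the exponents of $q$. By definition, $area(\kappa) := area(\mathbf{path}(\kappa))$, and the discussion following Corollary~\ref{path.area.abac} establishes that $area(\kappa)$ equals the number of rows of $\kappa$, which equals the number of positive SE-steps of $\partial(\kappa)$, which equals the number of nonnegative black beads in the $a$-abacus diagram, i.e. $\sum_i d_i = area(d)$. Hence for every $(a,as+1)$-core $\kappa$ with distinct parts and $d = \mathbf{abac}(\kappa)$ we have $area(\kappa) = \sum d_i$. Substituting this identification into \eqref{eq.fib.cores} with $b = as+1$ and reindexing the sum along the bijection $\mathbf{abac}$ yields exactly \eqref{equation.fib.d}.

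There is really no hard analytic step here; the only thing to be careful about is bookkeeping. The one subtlety worth spelling out is the coprimality issue flagged in the remarks: when $\gcd(a, as+1) = 1$ (which holds automatically here since $\gcd(a, as+1) = \gcd(a,1) = 1$) the set $\mathcal{A}^{(s)}_a$ is finite, so both sides are genuine polynomials and the rearrangement is trivially justified; I would note that $a$ and $as+1$ are always coprime so no convergence discussion is needed for $F^{(s)}_a(q)$. The main obstacle, such as it is, is simply to make sure that the chain of bijections $\kappa \leftrightarrow \partial(\kappa) \leftrightarrow d$ and the chain of equalities $area(\kappa) = \#\text{rows} = \#\{\text{positive SE-steps}\} = \#\{\text{nonnegative black beads}\} = \sum d_i$ are each invoked correctly from the earlier sections; once that is done, the proof is a one-line reindexing.
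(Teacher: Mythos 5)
Your proof is correct and follows essentially the same route as the paper's: invoke Theorem~\ref{thm.description} to identify the image of $\mathbf{abac}$ restricted to $(a,as+1)$-cores with distinct parts as exactly $\mathcal{A}^{(s)}_a$, observe that $area(\kappa)=\sum_i d_i$ via rows $=$ positive SE-steps $=$ nonnegative black beads, and reindex the sum. Your added remark that $\gcd(a,as+1)=1$ always holds, so no convergence issue arises, is a nice (if unnecessary) touch not spelled out in the paper.
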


\begin{proof}
From Proposition~\ref{thm.description}, the map $\mathbf{abac}\colon \kappa \rightarrow d$ is a bijection from the set of all $(a,b)$-cores $\kappa$ with distinct parts to the set of $a$-sparse vectors $d= (d_0, d_1,\ldots,\ d_{a-1})$ with $d_i \le s$ and $d_0 = 0$, i.e. the set $\mathcal{A}^{(s)}_{a}$.

Also note that bijection $\mathbf{abac}$ sends the $area$ statistic of $\kappa$ to the sum $\sum_{i=0} ^{a-1} d_i$, since the number of rows in $\kappa$ is equal to the number of positive SE-steps of $\partial(\kappa)$, which in turn is equal to the number of nonnegative black beads in the abacus diagram of $\kappa$.

Thus,
\begin{equation*}
F^{(s)}_a(q) = \sum_{\kappa} q^{area(\kappa)} = \sum_{d = \mathbf{abac}(\kappa)} q^{\sum d_i} = \sum_{d\in\mathcal{A}^{(s)}_{a}} q^{\sum d_i}.
\end{equation*}
\end{proof}

Justification of the term "graded Fibonacci numbers" comes from the proposition below. We will use a standard notation \defn{$\left(s\right)_q$} $ = 1+q+\ldots+q^{s-1} = \frac{1-q^s}{1-q}$. 

\begin{theorem}
\label{thm.recurrence}
Graded Fibonacci numbers $F^{(s)}_a (q)$ satisfy recurrence relation
\begin{equation}
\label{eq.fib.recurrence}
F^{(s)}_a (q) = F^{(s)}_{a-1} (q) + q \left(s\right)_q F^{(s)}_{a-2} (q)
\end{equation}
with initial conditions $F^{(s)}_{0} (q) = F^{(s)}_{1} (q) = 1$.
\end{theorem}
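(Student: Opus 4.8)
The plan is to argue entirely on the combinatorial side, using the identity \eqref{equation.fib.d}, namely $F^{(s)}_a(q) = \sum_{d\in\mathcal{A}^{(s)}_{a}} q^{\sum d_i}$, where $\mathcal{A}^{(s)}_{a}$ is the set of $a$-sparse vectors $d=(d_0,d_1,\ldots,d_{a-1})$ with $d_0=0$ and $d_i\le s$. I would prove the recurrence by splitting $\mathcal{A}^{(s)}_{a}$ according to the value of the last coordinate $d_{a-1}$, which ranges over $\{0,1,\ldots,s\}$. First, the initial conditions are immediate: $\mathcal{A}^{(s)}_{0}$ consists only of the empty vector and $\mathcal{A}^{(s)}_{1}$ only of the one-entry vector $(0)$, so $F^{(s)}_0(q)=F^{(s)}_1(q)=1$.

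For $a\ge 2$, write $\mathcal{A}^{(s)}_{a}=\bigsqcup_{j=0}^{s}\{d:d_{a-1}=j\}$. In the block $d_{a-1}=0$, deleting the last coordinate gives a vector of $\mathcal{A}^{(s)}_{a-1}$, and conversely appending a $0$ to any vector of $\mathcal{A}^{(s)}_{a-1}$ produces a vector of $\mathcal{A}^{(s)}_{a}$ (appending a $0$ cannot enlarge the support, so sparsity is preserved); this bijection preserves $\sum d_i$, so the block contributes $F^{(s)}_{a-1}(q)$. In the block $d_{a-1}=j$ with $1\le j\le s$, the index $a-1$ lies in $\mathrm{supp}(d)$, so sparsity forces $d_{a-2}=0$; deleting the last two coordinates gives a vector of $\mathcal{A}^{(s)}_{a-2}$, and the inverse map appends first a $0$ and then $j$. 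The resulting vector is still sparse because the only index adjacent to the new support element $a-1$ is $a-2$, which carries value $0$ and hence is not in the support. Under this bijection $\sum d_i = j + \sum_{i=0}^{a-3}d_i$, so the $j$-block contributes $q^{j}F^{(s)}_{a-2}(q)$. Summing $j$ from $1$ to $s$ gives $(q+q^2+\cdots+q^{s})F^{(s)}_{a-2}(q)=q\left(s\right)_q F^{(s)}_{a-2}(q)$, and adding the $j=0$ block yields exactly \eqref{eq.fib.recurrence}.

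Since the whole argument is a disjoint-union decomposition together with two truncation bijections that manifestly preserve the $area$ statistic, there is no real obstacle. The only points I would be careful to spell out are the degenerate small cases — in particular $a=2$, where "deleting the last two coordinates" lands on the empty vector of $\mathcal{A}^{(s)}_{0}$ and the recurrence reads $F^{(s)}_2(q)=1+q\left(s\right)_q$, matching the direct count of $\{(0,d_1):0\le d_1\le s\}$ — and the verification that appending $(0,j)$ to a sparse vector never creates two adjacent support indices, which is what makes the second map a genuine bijection onto $\{d\in\mathcal{A}^{(s)}_{a}: d_{a-1}=j\}$.
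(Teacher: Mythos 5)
Your proposal is correct and follows essentially the same route as the paper: both decompose the sum $\sum_{d\in\mathcal{A}^{(s)}_{a}}q^{\sum d_i}$ according to whether $d_{a-1}=0$ or $d_{a-1}=j\in\{1,\dots,s\}$, use sparsity to force $d_{a-2}=0$ in the latter case, and truncate to land in $\mathcal{A}^{(s)}_{a-1}$ or $\mathcal{A}^{(s)}_{a-2}$. Your treatment of the initial conditions via the empty vector is a harmless variant of the paper's convention of setting $F^{(s)}_{0}(q)=1$ after checking $F^{(s)}_{1}(q)=1$ and $F^{(s)}_{2}(q)=1+q\left(s\right)_q$ directly.
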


\begin{proof}
We divide the sum in (\ref{equation.fib.d}) into two parts: one over vectors $d$ with $a-1 \not\in \mathrm{supp}(d)$, and the other over vectors $d$ with $a-1 \in \mathrm{supp}(d)$.
\begin{multline*}
F^{(s)}_a(q) = \sum_{d \in \mathcal{A}^{(s)}_{a}} q^{\sum d_i} = \sum_{\substack{d\in \mathcal{A}^{(s)}_{a}\\ d_{a-1} = 0}} q^{\sum d_i} + \sum_{\substack{d\in \mathcal{A}^{(s)}_{a}\\ d_{a-1} \neq 0}} q^{\sum d_i} =
\\
= \sum_{d\in \mathcal{A}^{(s)}_{a-1}} q^{\sum d_i} + \sum_{d_{a-1} = 1}^{s} q^{d_{a-1}} \sum_{d \in \mathcal{A}^{(s)}_{a-2}} q^{\sum d_i} = F^{(s)}_{a-1} (q) + q \left(s\right)_q F^{(s)}_{a-2} (q).
\end{multline*}

For initial conditions, notice that $F^{(s)}_{1} (q) = 1$, since there is only one $(1,s+1)$-core, which is empty.
The number of $(2,2s+1)$-cores with distinct parts is equal to $s+1$, with corresponding $d_1 = 0,\ 1,\ldots,\ s$, and thus $F^{(s)}_{2} (q) = 1+q \left(s\right)_q$.

Following the recurrence we proved above, we can set $F^{(s)}_{0} (q) = 1$ for all $s$.
\end{proof}

\begin{remark}
Evaluating~\eqref{eq.fib.recurrence} at $q=1$ would give us a recursive relation for the number of $(a,as+1)$-cores with distinct parts. 
\begin{equation*}
F^{(s)}_a (1) = F^{(s)}_{a-1} (1) + s F^{(s)}_{a-2} (1), \qquad F^{(s)}_{0}(1) = F^{(s)}_{1}(1) = 1.
\end{equation*}
In particular, $F^{(1)}_{a}(1) = F_{a+1}$ is a classical Fibonacci number.
\end{remark}

\begin{remark}
In the limit $s \to\infty$, the relation~\eqref{eq.fib.recurrence} has the form
\begin{equation*}
F^{(\infty)}_{a} (q) = F^{(\infty)}_{a-1} (q) + \frac{q}{1-q} F^{(\infty)}_{a-2} (q).
\end{equation*}
In light of Remark~\ref{remark.s_to_infty}, relation above is the recurrence for the generating function of $area(\kappa)$ over all $a$-cores with distinct parts.
\end{remark}

\begin{theorem}
\label{thm.decompose}
\begin{equation}
F^{(s)}_a (q) = \sum_{n=0}^{\lfloor a/2 \rfloor} \left(q \left(s\right)_{q}\right)^{n} \  \binom{a-n}{n}.
\end{equation}
\end{theorem}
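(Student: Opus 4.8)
The plan is to bypass the recurrence and count directly from the abacus-vector model supplied by \eqref{equation.fib.d}, namely $F^{(s)}_a(q) = \sum_{d\in\mathcal{A}^{(s)}_a} q^{\sum d_i}$, organizing the sum according to the cardinality $n = |\mathrm{supp}(d)|$ of the support. A sparse vector $d\in\mathcal{A}^{(s)}_a$ is determined by two independent pieces of data: its support $S=\mathrm{supp}(d)$, which is an arbitrary $a$-sparse subset of $\{1,\dots,a-1\}$, and the values $d_i\in\{1,\dots,s\}$ assigned to each $i\in S$. Fixing $n$, the values $(d_i)_{i\in S}$ range independently over $\{1,\dots,s\}$, so the generating function $\sum q^{\sum_i d_i}$ over those values factors as $(q+q^2+\cdots+q^s)^n=(q(s)_q)^n$, regardless of which set $S$ is chosen.

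It then remains to count the $a$-sparse subsets $S\subseteq\{1,\dots,a-1\}$ of size $n$. The standard gap bijection $\{i_1<\cdots<i_n\}\mapsto\{i_1,\,i_2-1,\,\dots,\,i_n-(n-1)\}$ identifies these with arbitrary $n$-subsets of $\{1,\dots,a-n\}$, of which there are $\binom{a-n}{n}$; in particular such a set is empty unless $n\le\lfloor a/2\rfloor$ (which recovers the maximal-$n$ bound from Section~\ref{sec.s=1}). Summing the contributions over $n$ gives
\[
F^{(s)}_a(q) = \sum_{n=0}^{\lfloor a/2\rfloor}\binom{a-n}{n}\,\big(q(s)_q\big)^n,
\]
as claimed. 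As a sanity check one can instead argue by induction on $a$ from Theorem~\ref{thm.recurrence}: with $x=q(s)_q$ and $g_a(x)=\sum_{n\ge0}\binom{a-n}{n}x^n$, the initial values $g_0=g_1=1$ match, and Pascal's rule $\binom{a-1-n}{n}+\binom{a-1-n}{n-1}=\binom{a-n}{n}$ shows $g_a=g_{a-1}+x\,g_{a-2}$, which is exactly \eqref{eq.fib.recurrence}.

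I do not anticipate a genuine obstacle. The only points needing care are verifying the gap bijection (and hence the upper limit $\lfloor a/2\rfloor$) and, in the inductive variant, checking that the binomial boundary terms vanish so Pascal's rule applies across the full range of $n$. The factorization that produces the clean factor $(q(s)_q)^n$ rests only on the independence of the entries $d_i$ for $i\in S$, which is immediate from the definition of $\mathcal{A}^{(s)}_a$ once the support is fixed.
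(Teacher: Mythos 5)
Your proof is correct and follows essentially the same route as the paper: fix the support $\mathbf{S}$, factor the sum over the independent values $d_i\in\{1,\dots,s\}$ into $\bigl(q\left(s\right)_q\bigr)^{|\mathbf{S}|}$, and count the $a$-sparse $n$-subsets of $\{1,\dots,a-1\}$ as $\binom{a-n}{n}$. Your explicit gap bijection justifying that count, and the inductive cross-check against Theorem~\ref{thm.recurrence}, are welcome additions but not departures from the paper's argument.
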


\begin{proof}
For a fixed $a$-sparse support set $\mathbf{S}=\mathrm{supp}(d)$, the sum in (\ref{equation.fib.d}) is equal to 
\begin{equation}
\label{equation.fixed_S}
\sum_{\mathrm{supp}(d) = \mathbf{S}} q^{\sum d_i} = \prod_{i\in \mathbf{S}} \sum_{d_i=1}^s q^{d_i} = \left(q \left(s\right)_q\right)^{\left\vert{\mathbf{S}}\right\vert}.
\end{equation} 
For fixed $n = \left\vert{\mathbf{S}}\right\vert$, the number of possible $a$-sparse support sets $\mathbf{S}$ is the number  $n$-element subsets of $\{1,\ 2,\ldots,\ a-1\}$ such that no two elements are neighbouring each other. The number of such subsets is equal to $\binom{a-n}{n}$.

Summing (\ref{equation.fixed_S}) over all $\mathbf{S}$,

\begin{equation*}
F^{(s)}_a(q) = \sum_{\mathbf{S}} \left(q \left(s\right)_q\right)^{\left\vert{\mathbf{S}}\right\vert} = \sum_{n=0}^{\lfloor a/2 \rfloor} \sum_{\left\vert{\mathbf{S}}\right\vert =n} \left(q \left(s\right)_q\right)^{n}=  \sum_{n=0}^{\lfloor a/2 \rfloor} \left(q \left(s\right)_{q}\right)^{n}  \  \binom{a-n}{n}.
\end{equation*}
\end{proof}

\begin{theorem}
The generating function for $F^{(s)}_a(q)$ with respect to $a$ is
\begin{equation}
G^{(s)} (x;q) := \sum_{a=0}^{\infty} x^a F^{(s)}_a (q) = \frac{1}{1-x-q\left(s\right)_q x^2}.
\end{equation}
\end{theorem}
\begin{proof}
We use the recurrence (\ref{eq.fib.recurrence}).
\begin{multline*}
G^{(s)} (x;q) = \sum_{a=0}^{\infty} x^a F^{(s)}_a (q) = 1 + x + \sum_{a=2}^{\infty} x^a F^{(s)}_a (q) =
\\
= 1+x+ \sum_{a=2}^{\infty} x^a F^{(s)}_{a-1} (q) + \sum_{a=2}^{\infty} x^a q \left(s\right)_q F^{(s)}_{a-2} (q) = 
\\
=1+x + x \sum_{a=1}^{\infty} x^a F^{(s)}_a (q) + q\left(s\right)_q x^2 \sum_{a=0}^{\infty} x^a F^{(s)}_a (q) = 
\\
=1+x +x \left(G^{(s)} (x;q) - 1\right) + q\left(s\right)_q x^2 G^{(s)} (x;q) =
\\
= 1 + \left(x+ q\left(s\right)_q x^2\right) G^{(s)} (x;q).
\end{multline*}
Thus
\begin{equation*}
G^{(s)} (x;q) = \frac{1}{1-x-q\left(s\right)_q x^2}.
\end{equation*}
\end{proof}

\begin{remark}
In the limit $s\to\infty$,
\begin{equation*}
F^{(\infty)}_{a} (q) = \sum_{n=0}^{\lfloor a/2 \rfloor} \left(\frac{q}{1-q}\right)^{n} \  \binom{a-n}{n}, \qquad G^{(\infty)} (x,q) = \frac{1}{1-x-\frac{q}{1-q} x^2}.
\end{equation*}
\end{remark}

Now we consider the case $s=1$ to give a proof of part (4) of Theorem~\ref{thm.s=1}.

\begin{remark}
When $s=1$,
\begin{equation}
\label{equation.s=1.sum}
F^{(1)}_{a} (q) = \sum_{n=0}^{\lfloor a/2 \rfloor} q^{n} \  \binom{a-n}{n}, \qquad G^{(1)} (x,q) = \frac{1}{1-x-qx^2},
\end{equation}
and
\begin{equation}
\label{equation.s=1.recurrence}
F^{(1)}_{a} (q) = F^{(1)}_{a-1} (q) + q F^{(1)}_{a-2} (q), \qquad F^{(1)}_{0}(q) = F^{(1)}_{1}(q) = 1.
\end{equation}
\end{remark}

\begin{theorem}
The total sum of the sizes and the average size of $(a,a+1)$-cores with distinct parts are, respectively, given by
\begin{equation}
\sum_{i+j+k=a+1} F_{i} F_{j} F_{k} \quad \mathrm{and} \quad \sum_{i+j+k=a+1} \frac{F_{i} F_{j} F_{k}}{F_{a+1}}.
\end{equation}
\end{theorem}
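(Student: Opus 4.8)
The plan is to compute the total size $\sum_\kappa \mathrm{size}(\kappa)$ by differentiating a suitable two-variable generating function and evaluating at the right point, then recognize the answer as a triple convolution of Fibonacci numbers. First I would introduce the refined generating function
\[
H(x;q) := \sum_{a\ge 0} x^a \sum_{\kappa \in \mathrm{K}_{a,a+1}^{\mathrm{distinct}}} q^{\mathrm{size}(\kappa)},
\]
but since $\mathrm{size}(\kappa)$ does not factor nicely over the abacus vector $d$ (unlike $area$), this direct route is awkward. Instead I would exploit the size formula already proved in Section~\ref{sec.s=1}: for an $(a,a+1)$-core $\kappa$ with $\mathrm{supp}(\mathbf{abac}(\kappa))$ having $n$ elements and gap vector $(g_0,\dots,g_n)$ with $\sum g_j = a-2n$, we have $\mathrm{size}(\kappa)=\binom{n}{2}+n(a-2n)+\big(n(a-2n)-\ldots\big)$ — more precisely $\mathrm{size}(\kappa)=\tfrac12 n(2a+1-3n)-\sum_{j=0}^n j g_j$. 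Summing $\mathrm{size}(\kappa)$ over all such cores therefore splits into summing over $n$, over the $\binom{a-n}{n}$ choices of sparse support (equivalently over compositions $(g_0,\dots,g_n)$ of $a-2n$ into $n+1$ nonnegative parts), the quantity $\tfrac12 n(2a+1-3n) - \sum_j j g_j$.

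The key computation is then $\sum_{g} \sum_{j=0}^n j g_j$ over all nonnegative integer solutions of $g_0+\cdots+g_n=a-2n$. By symmetry each $g_j$ has the same average, so $\sum_g \sum_j j g_j = \binom{n}{2}\cdot\frac{1}{n+1}\sum_g(g_0+\cdots+g_n) = \binom{n}{2}\cdot\frac{a-2n}{n+1}\binom{a-n}{n}$, using $\sum_g 1 = \binom{a-n}{n}$ and the standard count $\sum_g g_0 = \frac{a-2n}{n+1}\binom{a-n}{n}$. Plugging in, the total size becomes
\[
\sum_{\kappa}\mathrm{size}(\kappa) = \sum_{n=0}^{\lfloor a/2\rfloor} \binom{a-n}{n}\!\left(\tfrac12 n(2a+1-3n) - \tfrac{n(n-1)(a-2n)}{2(n+1)}\right).
\]
Next I would encode this as a derivative: observe that $\sum_\kappa \mathrm{size}(\kappa)$ equals a combination of $\partial_q$ applied to refined polynomials built from $F^{(1)}_a(q) = \sum_n q^n\binom{a-n}{n}$ of equation~\eqref{equation.s=1.sum}, together with an extra generating-function variable tracking the position $j$ of the gap. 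Concretely, introduce $F^{(1)}_a(q,u) := \sum_n q^n \binom{a-n}{n}\,[\text{weight tracking }\sum j g_j]$ by replacing $\binom{a-n}{n}$ with the weighted count $\sum_g u^{\sum_j j g_j}$; this has an explicit product form and satisfies a modified recurrence, and $\sum_\kappa\mathrm{size}(\kappa)$ is extracted by applying $\tfrac12 n(2a+1-3n)$-type operators (i.e. $q\partial_q$ and $x\partial_x$ combinations) to $G^{(1)}(x;q)$ and a $(-\partial_u)|_{u=1}$ to the refined version.

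The cleanest finish, and the part I'd push hardest on, is recognizing the resulting rational generating function in $x$. Since $G^{(1)}(x;1) = \frac{1}{1-x-x^2} = \sum_a F_{a+1}x^a$, the triple convolution $\sum_{i+j+k=a+1}F_iF_jF_k$ has generating function $\big(\frac{x}{1-x-x^2}\big)^{?}$ — more precisely $\sum_a \big(\sum_{i+j+k=a+1}F_iF_jF_k\big)x^a$ equals $x^{2}\big(\frac{1}{1-x-x^2}\big)^3$ up to an index shift, which I would pin down carefully. So the strategy is: show that the generating function $\sum_a x^a \sum_\kappa \mathrm{size}(\kappa)$ obtained from the derivative computation equals exactly $\big(G^{(1)}(x;1)\big)^{3}$ times an explicit monomial/polynomial factor, and match coefficients. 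The main obstacle is bookkeeping: correctly forming the differential operator that produces $\tfrac12 n(2a+1-3n) - \sum_j j g_j$ as a coefficient — in particular handling the $\frac{1}{n+1}$ denominator from the symmetry averaging, which suggests integrating rather than differentiating at one stage — and then verifying the algebraic identity
\[
\frac{d}{dx}\!\left[\text{something}\right] \;=\; x^2\left(\frac{1}{1-x-x^2}\right)^{3}
\]
cleanly. Once the generating function identity $\sum_a x^a\sum_\kappa \mathrm{size}(\kappa) = x^2 (1-x-x^2)^{-3}$ is established, extracting coefficients gives $\sum_{i+j+k=a+1}F_iF_jF_k$ immediately, and dividing by $F^{(1)}_a(1)=F_{a+1}$ gives the average, completing part (4) of Theorem~\ref{thm.s=1}.
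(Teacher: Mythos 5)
Your overall strategy is the same as the paper's: start from the size formula $\mathrm{size}(\kappa)=\tfrac12 n(2a+1-3n)-\sum_j jg_j$, sum over $n$ and over the gap compositions $(g_0,\dots,g_n)$ of $a-2n$, evaluate the inner sum $\sum_g\sum_j jg_j$ by a symmetry argument, and finish by matching the generating function in $x$ against $\bigl(\tfrac{x}{1-x-x^2}\bigr)^3$. However, there is a concrete arithmetic error in your key computation: by symmetry each coordinate satisfies $\sum_g g_j=\tfrac{a-2n}{n+1}\binom{a-n}{n}$ (which you state correctly), but then
\begin{equation*}
\sum_g\sum_{j=0}^n jg_j=\Bigl(\sum_{j=0}^n j\Bigr)\cdot\frac{a-2n}{n+1}\binom{a-n}{n}=\binom{n+1}{2}\cdot\frac{a-2n}{n+1}\binom{a-n}{n}=\frac{n(a-2n)}{2}\binom{a-n}{n},
\end{equation*}
since $\sum_{j=0}^n j=\binom{n+1}{2}$, not $\binom{n}{2}$ as you wrote. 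With your value the spurious factor $\tfrac{n-1}{n+1}$ survives, and it is exactly this phantom $\tfrac{1}{n+1}$ that drives you toward the elaborate refined generating function $F^{(1)}_a(q,u)$ and the ``integrate rather than differentiate'' workaround. With the correct value the $(n+1)$ cancels, the total size collapses to $\sum_n\binom{a-n}{n}\tfrac{n(a-n+1)}{2}=\tfrac{a}{2}F'_a(1)-\tfrac12 F''_a(1)$ with $F_a(q)=\sum_n q^n\binom{a-n}{n}$, and the finish is a routine computation of $\tfrac{x^2}{2}\partial_x\partial_q G^{(1)}-\tfrac{x}{2}\partial_q^2 G^{(1)}$ at $q=1$, yielding $\sum_a x^{a+1}\Phi_a=\tfrac{x^3}{(1-x-x^2)^3}$; no extra variable $u$ is needed. (The paper evaluates the same inner sum by viewing the gap vectors as partitions in an $n\times(a-2n)$ rectangle and using conjugation symmetry to say the average size is half the rectangle; your coordinate-symmetry argument is an equally valid, arguably more elementary, route to the same closed form.)

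One further caution: the second half of your proposal is still a plan rather than a proof --- the refined polynomial ``with a weight tracking $\sum jg_j$,'' the differential operators producing $\tfrac12 n(2a+1-3n)$, and the index shift in the triple convolution are all flagged as things you ``would pin down.'' After correcting the binomial coefficient, all of that machinery becomes unnecessary, and the remaining steps (the partial-derivative computation and the coefficient extraction from $\bigl(\tfrac{x}{1-x-x^2}\bigr)^3$, whose coefficient of $x^{a+1}$ is precisely $\sum_{i+j+k=a+1}F_iF_jF_k$) should be written out explicitly to complete the argument.
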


\begin{proof}
Denote $\Phi_a$ to be the total sum of sizes of $(a,a+1)$-cores with distinct parts. Since the generating function of Fibonacci numbers $\sum_{i=1}^\infty x^{i} F_{i}$ is equal to $\frac{x}{1-x-x^2}$, then in order to prove the theorem it is enough to show that the generating function $\Gamma(x) := \sum_{a=2}^\infty x^{a+1} \Phi_a$ is equal to 
\begin{equation}
\Gamma(x) \stackrel{?}{=} \sum_{a=2}^{\infty} x^{a+1} \sum_{i+j+k=a+1} F_{i} F_{j} F_{k} = \left( \sum_{i=1}^\infty x^i F_i \right)^3 = \left(\frac{x}{1-x-x^2}\right)^3.
\end{equation}
We use the equation (\ref{equation.size}) to find a formula for $\Phi_a$.

\begin{multline}
\label{equation.size.sum}
\Phi_{a} = \sum_{\kappa} \mathrm{size}(\kappa) = \sum_{n}\sum_{\substack{g \\ \sum g_{i} = a-2n}} \left[ \frac{1}{6} 3n (2a+1-3n) - \sum_{i=0}^n{i g_i} \right] =\\
= \sum_{n} \left[ \binom{a-n}{n} \frac{n (2a+1-3n)}{2} - \sum_{\substack{g \\ \sum g_{i} = a-2n}} \sum_{i=0}^{n} i g_{i} \right]
\end{multline}
To evaluate the double sum, we notice that taking $\lambda$ to be a partition with $\lambda = (1^{g_{1}} 2^{g_{2}}\ldots n^{g_{n}})$,

\begin{multline*}
\sum_{\sum g_{i} = a-2n} \sum_{i=0}^{n} i g_{i} = \sum_{\substack{\lambda_{1} \leq n \\ l(\lambda) \leq a-2n}} |\lambda| =\\
= \left(\text{number of}\ \lambda\ \text{in a rectangle}\ n\times (a-2n)\right) \cdot \left(\text{average size of}\ \lambda\ \text{in } n\times (a-2n)\right) .
\end{multline*}

Note that the number of partitions that fit rectangle $n\times (a-2n)$ is equal to the number of paths from the bottom-right corner of the rectangle to the top-left corner, and thus is equal to $\binom{a-n}{n}$. 

The average size of the partition is equal to half of the area of the rectangle $n\times (a-2n)$ because of the symmetry of partitions. Thus, the average size of $\lambda$ is equal to $\frac{n(a-2n)}{2}$.

Therefore,

\begin{equation*}
\sum_{\sum g_{i} = a-2n} \sum_{i=0}^{n} i g_{i}= \binom{a-n}{n}\  \frac{n(a-2n)}{2}
\end{equation*}

Thus, the sum in (\ref{equation.size.sum}) evaluates to

\begin{equation*}
\Phi_a = \sum_{n} \binom{a-n}{n} \frac{n\left(a-(n-1)\right)}{2} = \frac{a}{2} \sum_{n} \binom{a-n}{n}\ n - \frac{1}{2} \sum_{n} \binom{a-n}{n}\ n(n-1).
\end{equation*}

Comparing it with (\ref{equation.s=1.sum}), the sum simplifies to
\begin{equation}
\label{eq.phi}
\Phi_a =\frac{a}{2} F'_{a} (1) - \frac{1}{2} F''_{a} (1),
\end{equation}
where the derivative $F_{a} (q)$ is a short-hand notation for $F^{(1)}_{a} (q)$, and the derivative $F'$ is taken with respect to $q$ (note that $F'_{0} (1) = F'_{1} (1) = 0$).
Using the expression for a generating function $G^{(1)} (x;q)$ in (\ref{equation.s=1.sum}),
\begin{multline*}
\Gamma (x) = \sum_{a=2}^{\infty} x^{a+1} \Phi_a \stackrel{(\ref{eq.phi})}{=} \frac{a}{2} \sum_{a=1}^\infty x^{a+1} F'_{a} (1) - \frac{1}{2} \sum_{a=0}^\infty x^{a+1} F''_{a} (1) =
\\
= \frac{x^2}{2} \sum_{a=1}^\infty a x^{a-1} F'_{a} (1) - \frac{x}{2} \sum_{a=0}^\infty x^{a} F''_{a} (1) =   \frac{x^2}{2} \frac{\partial^2 G^{(1)}(x;1)}{\partial x \partial q}-\frac{x}{2} \frac{\partial^2 G^{(1)}(x;1)}{\partial q^2} \stackrel{(\ref{equation.s=1.sum})}{=}
\\
\stackrel{(\ref{equation.s=1.sum})}{=} \frac{x^2}{2}\frac{2x(1-x-x^2) - 2x^2(-1-2x)}{(1-x-x^2)^3} - \frac{x}{2}\frac{2x^4}{(1-x-x^2)^3}
=\frac{x^3}{(1-x-x^2)^3}.
\end{multline*}
\end{proof}

\section{Bounce statistic and bigraded Fibonacci numbers.}
\label{section.bigraded}

In light of Remark \ref{remark.Catalan}, we can look at the summand of the bigraded Catalan numbers corresponding to the set of $(a,b)$-cores with distinct parts. There is a definition of bigraded Catalan numbers in terms of $(a,b)$-cores directly (see~\cite{ALW.16}), but the skew length statistic has rather complicated expression in terms of abacus vectors $d$. Nevertheless, we can use another pair of statistics on the set of $(a,b)$-Dyck paths in the case $b=as+1$, namely $area$ and $bounce$.

To define the $bounce$ statistic of a $(a,as+1)$-Dyck path $\pi$, first we present the construction of a bounce path for a Dyck path $\pi$ due to N.Loehr~\cite{Loehr.03}. 

We start at the point $(a,as+1)$ of the rectangle $R_{a,as+1}$ and travel in $W$ (West) direction until we hit an $N$ (North) step of $\pi$. Denote $v_1$ to be the number of $W$ steps we did in the process, and travel $w_1 := v_1$ steps in the $S$ (South) direction. 

After that we travel in $W$ direction until we hit an $N$ step of $\pi$ again. Denote $v_2$ to be the number of $W$ steps made this time, and travel in $S$ direction $w_2 := v_2 +v_1$ steps if $s > 1$ or $w_2 := v_2$ steps if $s=1$. 

In general, on $k$-th iteration, after we travel $v_k \ge 0$ steps in $W$ direction before hitting an $N$-step of $\pi$, we then travel in $S$ direction $w_k:=v_k+\ldots+v_{k-s+1}$ steps if $k\geq s$ or $w_k:=v_k+\ldots+v_1$ steps if $k < s$. The bounce path always stays above the main diagonal and eventually hits the point $(0,1)$, where the algorithm terminates (see~\cite{Loehr.03} for details). 

To calculate \defn{bounce} statistic of $\pi$, each time our bounce path reaches an $N$ step $x$ of $\pi$ after traveling $W$, add up the number of squares to the left of $\pi$ and in the same row as $x$. We will call those rows \defn{bounce rows} (see Fig. \ref{figure.bounce}).

\begin{definition} \cite{Loehr.03}
Bigraded rational Catalan number is defined by the equation 
\begin{equation}
C_{a,as+1} (q,t) = \sum_{\pi} q^{area(\pi)} t^{bounce(\pi)},
\end{equation}
where the sum is taken over all $(a,as+1)$-Dyck paths $\pi$.
\end{definition}

\begin{figure}[t]
\includegraphics[scale=0.35]{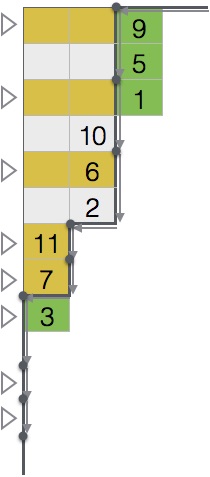} \centering
\caption{$(4,13)$-Dyck path with $\alpha(\pi) = \{1,3,5,9\}$ and $e(\pi) = (3,0,1)$ together with its bounce path. The bounce rows are marked by arrows.} \centering
\label{figure.bounce}
\end{figure}

Following that definition, we restrict the sum to define bigraded Fibonacci numbers.

\begin{definition}
Bigraded rational Fibonacci number is defined by the equation
\begin{equation}
\label{definition.bigraded}
F^{(s)}_a (q,t) = \sum_{\pi} q^{area(\pi)} t^{bounce(\pi)},
\end{equation}
where the sum is taken over all $(a,as+1)$- Dyck paths such that $\kappa = \mathbf{core}(\pi)$ has distinct parts.
\end{definition}

\begin{remark} After the specialization $t=1$ bigraded Fibonacci numbers $F^{(s)}_a (q,1)$ are equal to graded $F^{(s)}_a (q) $ from the previous section. 
\end{remark}

Under the maps $\mathbf{core}$ and $\mathbf{abac}$, there is a correspondence between the Dyck paths $\pi$ in (\ref{definition.bigraded}) and sparse abacus vectors $d \in \mathcal{A}^{(s)}_{a}$. Denote the bounce statistic on $\mathcal{A}^{(s)}_{a}$ to be a bounce statistic of the corresponding Dyck path $\pi$, i.e. $bounce(\mathbf{abac}(\mathbf{core}(\pi))) = bounce(\pi)$.

\begin{theorem}
Given an abacus vector $d\in\mathcal{A}^{(s)}_{a}$,
\begin{equation}
\label{bounce.abac}
bounce(d) = s\ \binom{a}{2} - \sum_{i=1}^{a-1} (a-i)d_i.
\end{equation}
\end{theorem}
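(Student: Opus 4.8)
The plan is to transfer the bounce statistic from Dyck paths back onto the abacus vector via Corollary~\ref{path.area.abac} and then to follow Loehr's bounce path explicitly. By Corollary~\ref{path.area.abac}, the Dyck path $\pi$ corresponding to $d$ has area vector $e(\pi)=d$, so after the reindexing $i\mapsto a-i$ and the identity $\sum_{i=1}^{a-1}(a-i)=\binom{a}{2}$ the claim becomes $bounce(\pi)=\sum_{c=1}^{a-1}c\bigl(s-e_{a-c}(\pi)\bigr)$. Along the way I would record an explicit description of $\pi$: the Dyck condition forces the $j$-th East step to occur at height at least $sj+1$, while $e_{a-j}(\pi)$ is exactly the number of area boxes of $\pi$ in the $j$-th column; combining these, the $j$-th East step of $\pi$ occurs at height $y_j=sj+1+e_{a-j}(\pi)$ for $1\le j\le a$ (using $y_0:=0$ and $e_0(\pi)=0$). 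Consequently the maximal vertical run of $\pi$ at abscissa $c$ has length $y_{c+1}-y_c=s+e_{a-c-1}(\pi)-e_{a-c}(\pi)$ for $1\le c\le a-1$.

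Next I would reduce $bounce(\pi)$ to a count of the vertical runs of the bounce path. Every bounce row is entered by a West-run ending at some abscissa $c$, and, $\pi$ being monotone, the number of cells of $R_{a,as+1}$ to the left of $\pi$ in that row is exactly $c$. Hence $bounce(\pi)=\sum_c c\,m_c$, where $m_c$ is the number of South-runs performed by the bounce path at abscissa $c$ (and $m_c=0$ whenever the vertical run of $\pi$ at $c$ is empty). Everything is thus reduced to proving $m_c=s-e_{a-c}(\pi)$ for $1\le c\le a-1$.

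This identity is the heart of the argument, and it is exactly here that the distinct-parts hypothesis enters: by Theorem~\ref{thm.description}, $e(\pi)$ is sparse with all entries at most $s$. One checks that the bounce path visits the abscissae carrying nonempty runs in strictly decreasing order, at each step arriving at the top $y_{c+1}$ of the run at abscissa $c$ and descending to its bottom $y_c$, so that it drops a total of $y_{c+1}-y_c$ there (and only $y_1-1$ at abscissa $0$, before halting at $(0,1)$). Since the South-run lengths satisfy $w_k=v_k+v_{k-1}+\dots+v_{k-s+1}$, a length-$s$ sliding window of the West-run lengths $v_k$, and the $v_k$ vanish except at the (widely separated, by sparsity) iterations at which the path jumps to a new abscissa, one can induct along the visited abscissae. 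Sparsity makes abscissae with empty run isolated, so the jumps have size $1$ or $2$, and forces such abscissae and their neighbours to have the appropriate (zero) $e$-values; this pins down the sliding window at each stage to one of two shapes -- either a single West-run of size $u\in\{1,2\}$ that divides $y_{c+1}-y_c$ (so the South-runs at $c$ are $u,u,\dots,u$), or two West-runs of size $1$ separated by a gap governed by the bump height $e_{a-c-1}(\pi)$ of the preceding column (so the South-runs at $c$ are a block of $2$'s followed by a block of $1$'s). In either case a short computation, using $y_{c+1}-y_c=s+e_{a-c-1}(\pi)-e_{a-c}(\pi)$ and the sparsity facts, shows the South-runs exactly fill $y_{c+1}-y_c$ in $s-e_{a-c}(\pi)$ steps with no overshoot, and that a window of the same admissible form is handed on to the next abscissa, closing the induction. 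I expect this bookkeeping -- following the sliding window of West-runs across the skipped columns and checking the divisibility and no-overshoot statements in each case -- to be the main obstacle, rather than any single conceptual point.

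Summing over $c$ and invoking Corollary~\ref{path.area.abac} once more then yields
\[
bounce(\pi)=\sum_{c=1}^{a-1}c\bigl(s-e_{a-c}(\pi)\bigr)=s\binom{a}{2}-\sum_{c=1}^{a-1}c\,e_{a-c}(\pi)=s\binom{a}{2}-\sum_{i=1}^{a-1}(a-i)\,d_i,
\]
as desired. As a sanity check, for the $(4,13)$-path of Figure~\ref{figure.bounce} one has $d=(0,3,0,1)$ and $s=3$, and the formula gives $bounce=3\binom{4}{2}-(3\cdot3+2\cdot0+1\cdot1)=18-10=8$.
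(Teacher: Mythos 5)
Your proposal is correct, and its computational core --- running Loehr's algorithm explicitly on a sparse area vector with entries at most $s$, and splitting into cases according to whether the entry attached to the current column vanishes --- is the same as the paper's. The packaging differs. The paper passes to the complementary statistic $bounce'(\pi)=s\binom{a}{2}-bounce(\pi)$ and inducts on $a$: it computes the contribution of the top $s$ rows (if $d_1=0$) or $2s$ rows (if $d_1>0$) and then deletes one or two columns; what makes this clean is that after $s$, resp.\ $2s$, iterations every nonzero $v_k$ has expired from the window $w_k=v_k+\dots+v_{k-s+1}$, so the remaining bounce path is literally the bounce path of the truncated diagram and the induction hypothesis applies verbatim. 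You instead prove the per-column count $m_c=s-e_{a-c}(\pi)$ and therefore must carry the window across columns. This obliges you to verify that the window never holds more than two nonzero West-runs --- which is true but uses sparsity twice: for consecutive nonempty columns $c''>c'$ one checks $e_{a-c''}+e_{a-c'}<s$ both when $c''-c'=1$ and when $c''-c'=2$ --- and then to work through the two window shapes you describe. I checked that this closes: in your second shape one necessarily has $u_c=u_{c'}=1$ and $e_{a-c}=0$, the South-runs are $e_{a-c-1}$ twos followed by $s-e_{a-c-1}$ ones, summing to $s+e_{a-c-1}=y_{c+1}-y_c$ with no overshoot since $e_{a-c-1}<s$. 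So your route is a valid, slightly more delicate reorganization of the same argument; what it buys is the explicit distribution $m_c=s-e_{a-c}$ of bounce rows among the columns, which the paper's telescoping induction never isolates.
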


\begin{proof}
Let $\pi$ be the corresponding Dyck path, i.e.~$\mathbf{abac}(\mathbf{core}(\pi)) = d$.
It is easier to consider the statistic $bounce'(\pi) = s \binom{a}{2} - bounce(\pi)$. Here $s \binom{a}{2}$ counts the total number of boxes in the upper triangle $T_{a,as+1}$, and thus $bounce'(\pi)$ counts the number of boxes in non-bounce rows to the left of $\pi$ plus the number of area boxes of $\pi$. Thus, we need to show
\begin{equation}
\label{equation.abacus.bounce}
bounce'(d) \stackrel{?}{=}  \sum_{i=1}^{a-1} (a-i)d_i.
\end{equation}

We will prove (\ref{equation.abacus.bounce}) by induction on $a$. Base case $a=1$ is straightforward. Assume now that (\ref{equation.abacus.bounce}) is true for any $a \leq k$ and consider the case $a=k+1$. 

According to Corollary~\ref{path.area.abac}, the area vector $e(\pi) = (d_1,\ldots,d_k)$. 

If $d_1 = e_1(\pi) = 0$, the first $s$ iterations of the bounce path algorithm yields values for $W$ steps $\nu_1=1,\ \nu_2 = \ldots = \nu_s = 0$ and the corresponding steps in $S$ direction are $w_1 = w_2 = \ldots = w_s = 1$, ending at the point $(k,ks+1)$ (and after that point the values of $\nu_1,\ldots,\ \nu_s$ don't contribute to the bounce path). 

Thus the top $s$ rows of the upper triangle $T_{k+1,(k+1)s+1}$ are bounce rows and moreover there are no area boxes of $\pi$ in those rows. Therefore the top $s$ rows don't contribute anything to $bounce'(\pi)$, and we can safely erase them, reducing $a$ by one and reducing all indexes of $d$ by one. Denoting $d' = (d'_0,\ldots,d'_{k-1}) = (d_1,\ldots,d_k) = e(\pi)$,
\begin{equation*}
bounce'(d) = bounce'(d') = \sum_{i=1}^{k-1} (k-i)d'_i = \sum_{i=2}^{k} (k-(i-1))d_i = \sum_{i=1}^{k} ((k+1)-i)d_i.
\end{equation*}

If $d_1= e_1(\pi) >0$ (see Fig.\ref{figure.bounce}), then $d_2 =e_2(\pi) = 0$ because of the sparsity of $d$. The first $s$ iterations of the bounce path algorithm yields values for $W$ steps $\nu_1 =1, \ \nu_2= \ldots = \nu_{s-d_1} = 0,\ \nu_{s-d_1+1} = 1, \ \nu_{s-d_1+2}=\ldots = \nu_{2s-d_1} = 0$ with the corresponding steps in the $S$ direction equal to $w_1 =\ldots = w_{s-d_1} = 1$, $w_{s-d_1 +1} = \ldots = w_s = 2$, $w_{s+1} = \ldots = w_{2s-d_1} = 1$, ending at the point $(k-1,(k-1)s+1)$ (and after that point the values of $\nu_1,\ldots,\ \nu_{2s-d_1}$ don't contribute to the bounce path). 

For the top $2s$ rows of $T_{k+1,(k+1)s+1}$ the non-bounce rows appear exactly when the bounce path travels 2 steps in the $S$ direction, i.e.~when $w_i = 2$. Thus, there are $d_1$ non-bounce rows, each contributing $k-1$ boxes to statistic $bounce'$. Besides that, there are $d_1$ area boxes that also count towards $bounce'$. Thus, the contribution of the top $2s$ rows into $bounce'$ is equal to $d_1k$. Denoting $d' = (d'_0,\ldots,d'_{k-2}) = (d_2,\ldots,d_k)$,
\begin{equation*}
bounce'(d) = bounce'(d') + d_1k = \sum_{i=1}^{k-2} (k-1-i)d_{i+2} + d_1k = \sum_{i=1}^{k} ((k+1)-i)d_i.
\end{equation*}
\end{proof}

\begin{corollary}
\begin{equation}
\label{equation.fibonacci.abacus}
t^{-s\binom{a}{2}} F^{(s)}_a (q,t) =  \sum_{d\in \mathcal{A}^{(s)}_{a}} q^{\sum d_i} t^{-\sum (a-i) d_i} = \sum_{d\in \mathcal{A}^{(s)}_{a}} (qt^{-a})^{\sum d_i} t^{\sum i d_i} = \sum_{d\in \mathcal{A}^{(s)}_{a}} q^{\sum d_i} t^{-\sum i d_i}.
\end{equation}
\end{corollary}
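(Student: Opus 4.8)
The plan is to obtain each of the three equalities in (\ref{equation.fibonacci.abacus}) separately, with only the last one carrying any real content. For the leftmost equality I would transport the defining sum (\ref{definition.bigraded}) through the composite bijection $\pi \mapsto \mathbf{core}(\pi) \mapsto \mathbf{abac}(\mathbf{core}(\pi)) = d$. By Theorem~\ref{thm.description} together with Corollary~\ref{path.area.abac}, the Dyck paths $\pi$ indexing $F^{(s)}_a(q,t)$ correspond precisely to the vectors $d \in \mathcal{A}^{(s)}_a$; under this correspondence $area(\pi)$ becomes $\sum_i d_i$ (the area vector is $e(\pi)=(d_1,\dots,d_{a-1})$ and $area$ is the sum of its entries), while $bounce(\pi) = bounce(d) = s\binom{a}{2} - \sum_{i=1}^{a-1}(a-i)d_i$ by (\ref{bounce.abac}). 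Pulling the constant $t^{s\binom{a}{2}}$ out of the sum and multiplying both sides by $t^{-s\binom{a}{2}}$ yields the first identity. The middle equality is then pure bookkeeping: $\sum_i (a-i)d_i = a\sum_i d_i - \sum_i i d_i$, and $q^{\sum d_i} t^{-a\sum d_i} = (qt^{-a})^{\sum d_i}$.

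The one substantive step is the rightmost equality, which I would prove by exhibiting an explicit involution $\rho$ of $\mathcal{A}^{(s)}_a$ that reverses the block $(d_1,\dots,d_{a-1})$; concretely, set $\rho(d)_i = d_{a-i}$ with indices read modulo $a$, so that $\rho(d)_0 = d_0 = 0$ and $\rho(d)_i = d_{a-i}$ for $1 \le i \le a-1$. I would check that $\rho$ is well defined on $\mathcal{A}^{(s)}_a$: the entries of $\rho(d)$ are a permutation of those of $d$, hence still lie in $\{0,1,\dots,s\}$; the zeroth coordinate is still $0$; and $\mathrm{supp}(\rho(d)) = \{\,a-i : i \in \mathrm{supp}(d)\,\}$ is $a$-sparse because $i \mapsto a-i$ preserves absolute differences. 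Clearly $\rho^2 = \mathrm{id}$. Since $\sum_i \rho(d)_i = \sum_i d_i$ and $\sum_i i\,\rho(d)_i = \sum_{i=1}^{a-1} i\, d_{a-i} = \sum_{j=1}^{a-1}(a-j)d_j = a\sum_j d_j - \sum_j j\,d_j$, replacing the summation variable $d$ by $\rho(d)$ in $\sum_{d\in\mathcal{A}^{(s)}_a}(qt^{-a})^{\sum d_i} t^{\sum i d_i}$ turns the summand into $q^{\sum d_i} t^{-\sum i d_i}$, which is exactly the rightmost sum.

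I do not expect a genuine obstacle: the statement is a formal consequence of (\ref{bounce.abac}) and of the reversal symmetry of sparse abacus vectors. The only point requiring care is the modular-index convention in the definition of $\rho$ and the matching of summation ranges in the reindexing $j = a-i$, so that the coordinate $d_0 = 0$ is treated consistently and $\sum_{i=1}^{a-1}$ may be freely replaced by $\sum_{i=0}^{a-1}$ throughout.
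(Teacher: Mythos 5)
Your proposal is correct and follows essentially the same route as the paper: the first two equalities are read off from the definition of $F^{(s)}_a(q,t)$ together with the formula (\ref{bounce.abac}), and the last equality is obtained from the reflection symmetry $i\mapsto a-i$ of $\mathcal{A}^{(s)}_a$. You merely spell out the well-definedness of the involution and the reindexing computation, which the paper leaves implicit.
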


\begin{proof}
First two equalities follow directly from (\ref{definition.bigraded}) and (\ref{bounce.abac}). For the last equality we use the symmetry of $\mathcal{A}^{(s)}_{a}$ under the reflection of indexes $0\mapsto 0, \ i \mapsto a-i$.
\end{proof}

From (\ref{equation.fibonacci.abacus}) it is easier to work with normalized polynomials
\begin{equation}
\label{equation.norm.fibonacci}
\tilde F^{(s)}_a (q,t) := t^{-s\binom{a}{2}} F^{(s)}_a (q,t^{-1}) = \sum_{d\in \mathcal{A}^{(s)}_{a}} q^{\sum d_i} t^{\sum i d_i} = \sum_{d\in \mathcal{A}^{(s)}_{a}} (qt^{a})^{\sum d_i} t^{-\sum i d_i}.
\end{equation}

\begin{remark}
In terms of Dyck paths, $\tilde F^{(s)}_a (q,t)$ is equal to the sum of $q^{area(\pi)} t^{bounce'(\pi)}$ over $(a,as+1)$-Dyck paths $\pi$ with $\mathbf{core}(\pi)$ having distinct parts.
\end{remark}

Using the simple expression of $\tilde F^{(s)}_a (q,t)$ in (\ref{equation.norm.fibonacci}), we prove recursive relations similar to Proposition~\ref{thm.recurrence}.

\begin{theorem}
Normalized bigraded Fibonacci numbers $\tilde F^{(s)}_a (q,t)$ satisfy the following relations:
\begeq
\label{recurrence1}
\tilde F^{(s)}_{a+1} (q,t) = \tilde F^{(s)}_{a} (q,t) + qt^a \left(s\right)_{qt^a} \tilde F^{(s)}_{a-1} (q,t)
\eneq
\begeq
\label{recurrence2}
\tilde F^{(s)}_{a+1} (q,t) = \tilde F^{(s)}_{a} (qt,t) + qt \left(s\right)_{qt} \tilde F^{(s)}_{a-1} (qt^2,t),
\eneq
with initial conditions $\tilde F^{(s)}_0 (q,t) =\tilde F^{(s)}_1(q,t) = 1$.
\end{theorem}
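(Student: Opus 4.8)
The plan is to derive both relations purely combinatorially from the closed form~\eqref{equation.norm.fibonacci}, writing $\tilde F^{(s)}_a(q,t)=\sum_{d\in\mathcal{A}^{(s)}_a}q^{\sum d_i}t^{\sum i d_i}$, and splitting the sum over $\mathcal{A}^{(s)}_{a+1}$ according to a single coordinate, exactly as in the proof of Theorem~\ref{thm.recurrence} but now keeping track of the $t$-weight. Recall a vector of $\mathcal{A}^{(s)}_{a+1}$ has the form $d=(d_0,d_1,\dots,d_a)$ with $d_0=0$, sparse support, and $0\le d_i\le s$.

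For~\eqref{recurrence1} I would partition $\mathcal{A}^{(s)}_{a+1}$ according to whether $d_a=0$. If $d_a=0$, deleting the last coordinate is a bijection onto $\mathcal{A}^{(s)}_a$ that leaves both $\sum d_i$ and $\sum i d_i$ unchanged, so this block contributes $\tilde F^{(s)}_a(q,t)$. If $d_a\neq 0$, sparsity forces $d_{a-1}=0$, and deleting $d_{a-1}$ and $d_a$ is a bijection onto $\mathcal{A}^{(s)}_{a-1}$ under which $\sum d_i$ decreases by $d_a$ and $\sum i d_i$ decreases by $a d_a$; hence this block contributes $\bigl(\sum_{k=1}^{s}(qt^a)^{k}\bigr)\tilde F^{(s)}_{a-1}(q,t)=qt^a\left(s\right)_{qt^a}\tilde F^{(s)}_{a-1}(q,t)$, using $\sum_{k=1}^s x^k = x(s)_x$. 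Adding the two blocks gives~\eqref{recurrence1}.

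For~\eqref{recurrence2} I would instead split according to whether $d_1=0$, peeling coordinates off the \emph{front}. If $d_1=0$, shifting indices down by one, $d'_j:=d_{j+1}$ for $j=0,\dots,a-1$, identifies $d$ with an element of $\mathcal{A}^{(s)}_a$; here $\sum_{i} d_i=\sum_j d'_j$ while $\sum_i i d_i=\sum_j (j+1)d'_j=\sum_j j d'_j+\sum_j d'_j$, so each monomial becomes $(qt)^{\sum d'_j}t^{\sum j d'_j}$ and this block contributes $\tilde F^{(s)}_a(qt,t)$. If $d_1\neq 0$, sparsity forces $d_2=0$; shifting down by two, $d'_j:=d_{j+2}$ for $j=0,\dots,a-2$, identifies the tail with $\mathcal{A}^{(s)}_{a-1}$, and one computes $\sum_i d_i=d_1+\sum_j d'_j$ and $\sum_i i d_i=d_1+\sum_j(j+2)d'_j$, so the weight factors as $(qt)^{d_1}\,(qt^2)^{\sum d'_j}t^{\sum j d'_j}$. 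Summing over $d_1=1,\dots,s$ and over $d'$ gives $qt\left(s\right)_{qt}\tilde F^{(s)}_{a-1}(qt^2,t)$, which yields~\eqref{recurrence2}.

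The base cases are immediate: $\mathcal{A}^{(s)}_0$ is the empty vector and $\mathcal{A}^{(s)}_1=\{(0)\}$, so $\tilde F^{(s)}_0=\tilde F^{(s)}_1=1$. I do not expect a genuine obstacle here; the only place demanding care is tracking how the statistic $\sum_i i d_i$ transforms under the two index shifts (a down-by-one shift costs an extra $+\sum d'_j$, a down-by-two shift costs $+2\sum d'_j$, and the removed front coordinate $d_1$ carries its own weight $(qt)^{d_1}$ from position $1$), which is exactly what produces the dilations $q\mapsto qt$ and $q\mapsto qt^2$ in the second relation. Everything else is the same "split on the boundary coordinate" bookkeeping already used for the one-variable recurrence.
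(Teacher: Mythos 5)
Your proof is correct. For \eqref{recurrence1} it coincides with the paper's argument: split $\mathcal{A}^{(s)}_{a+1}$ according to whether $d_a=0$, use sparsity to force $d_{a-1}=0$ in the second block, and read off the factor $qt^a\left(s\right)_{qt^a}$ from the weight $q^{\sum d_i}t^{\sum i d_i}$. For \eqref{recurrence2} you take a mildly different route: you split on the \emph{first} coordinate $d_1$ and compute directly how the statistic $\sum_i i\,d_i$ transforms under the down-shifts by one and by two, which is exactly what produces the substitutions $q\mapsto qt$ and $q\mapsto qt^2$ (and the prefactor $(qt)^{d_1}$ from the removed front coordinate). The paper instead splits on $d_a$ a second time, but applied to the reflected form of the generating sum, $\sum_{d}(qt^{a+1})^{\sum d_i}t^{-\sum i d_i}$, which is available thanks to the palindromic symmetry $i\mapsto a+1-i$ of $\mathcal{A}^{(s)}_{a+1}$ recorded in \eqref{equation.norm.fibonacci}. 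The two derivations are conjugate to one another under that reflection, so the difference is one of presentation rather than substance: your version has the small advantage of not invoking the symmetry identity at all, at the cost of redoing the index-shift bookkeeping, which you carry out correctly (including the verification that both shift maps are weight-compatible bijections onto $\mathcal{A}^{(s)}_{a}$ and $\mathcal{A}^{(s)}_{a-1}$). The base cases are handled the same way in both arguments.
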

\begin{proof}
For equation \eqref{recurrence1} use the first sum in \eqref{equation.norm.fibonacci} and divide the set $\mathcal{A}^{(s)}_{a+1}$ into two parts corresponding to vectors $d$ with $d_{a} = 0$ and with $d_{a}>0$.
\begin{multline*}
\tilde F^{(s)}_{a+1} (q,t) = \sum_{d\in \mathcal{A}^{(s)}_{a+1}} q^{\sum d_i} t^{\sum i d_i} = 
\sum_{d' \in \mathcal{A}^{(s)}_{a}} q^{\sum d'_i} t^{\sum i d'_i} + \sum_{d_a=1}^s \left(qt^a\right)^{d_a} \sum_{d' \in \mathcal{A}^{(s)}_{a-1}} q^{\sum d'_i}  t^{\sum i d'_i} = \\
= \tilde F^{(s)}_{a} (q,t) + qt^a \left(s\right)_{qt^a} \tilde F^{(s)}_{a-1} (q,t).
\end{multline*}
For equation \eqref{recurrence2} use the second sum in \eqref{equation.norm.fibonacci} and again divide $\mathcal{A}^{(s)}_{a+1}$ into two parts corresponding to vectors $d$ with $d_{a} = 0$ and with $d_{a}>0$.
\begin{multline*}
\tilde F^{(s)}_{a+1} (q,t) = \sum_{d\in \mathcal{A}^{(s)}_{a+1}} (qt^{a+1})^{\sum d_i} t^{-\sum i d_i} =\\
= \sum_{d' \in \mathcal{A}^{(s)}_{a}} \left((qt)t^{a-1}\right)^{\sum d'_i} t^{-\sum i d'_i} + \sum_{d_a=1}^s (qt^{a+1} t^{-a})^{d_a} \sum_{d' \in \mathcal{A}^{(s)}_{a-1}} \left((qt^2)t^{a-2}\right)^{\sum d'_i} t^{-\sum i d'_i} =\\
= \tilde F^{(s)}_{a} (qt,t) + qt \left(s\right)_{qt} \tilde F^{(s)}_{a-1} (qt^2,t)
\end{multline*}
\end{proof}

\begin{remark}
Setting $s=1$ and $a\to\infty$, the recurrence \eqref{recurrence2} gives
\begeq
\tilde F^{(1)}_{\infty} (q,t) = \tilde F^{(1)}_{\infty} (qt,t) + qt \tilde F^{(1)}_{\infty} (qt^2,t),
\eneq
which is an Andrews $q$-difference equation related to Rogers-Ramanujan identities (see~\cite{Andrews.86}).
\end{remark}

\bibliographystyle{acm}
\bibliography{paper}

\end{document}